\title{Generation of Iterated Wreath Products Constructed from
  Alternating, Symmetric and Cyclic Groups}
\author{Jiaping Lu \& Martyn Quick\\[10pt]
  Mathematical Institute, University of St Andrews,\\
  North Haugh, St Andrews, Fife, KY16 9SS\\[10pt]
  \begin{tabular}{c}\texttt{jl337@st-andrews.ac.uk}\\
    \texttt{jiapingljp@hotmail.com}
  \end{tabular}\qquad\qquad \texttt{mq3@st-andrews.ac.uk}}
\newtheorem{lemma}{Lemma}[section]
\newtheorem{prop}[lemma]{Proposition}
\newtheorem{thm}[lemma]{Theorem}
\newtheorem{example}[lemma]{Example}
\newtheorem{THM}{Theorem}
\newtheorem{COR}[THM]{Corollary}
\theoremstyle{nonumberplain}
\newtheoremstyle{proofstyle}%
  {\item[\theorem@headerfont\hskip\labelsep ##1\theorem@separator]}%
  {\item[\theorem@headerfont\hskip\labelsep ##3\theorem@separator]}
\theoremstyle{proofstyle}
\newtheorem{proof}{Proof:}
\newcommand{\0}{\mathbf{0}}
\newcommand{\trivsubgp}{\mathbf{1}}
\newcommand{\Cent}[2]{\mathrm{C}_{#1}(#2)}
\newcommand{\Cohom}[2][1]{\mathrm{H}^{#1}(#2)}
\newcommand{\Der}[1]{\mathrm{Der}(#1)}
\newcommand{\End}[1]{\operatorname{End}(#1)}
\newcommand{\Field}[1]{\mathbb{F}_{#1}}
\newcommand{\Frat}[1]{\Phi(#1)}
\newcommand{\Inn}[1]{\mathrm{Inn}(#1)}
\newcommand{\length}[1]{\mathopen{|}#1\mathclose{|}}
\newcommand{\order}[1]{\mathopen{|}#1\mathclose{|}}
\newcommand{\pr}[1]{\operatorname{pr}(#1)}
\newcommand{\rad}[1]{\operatorname{rad}#1}
\newcommand{\set}[2]{\{\,#1\mid#2\,\}}
\newcommand{\Zint}{\mathbb{Z}}
\renewcommand{\geq}{\geqslant}
\renewcommand{\leq}{\leqslant}
\renewcommand{\nleq}{\nleqslant}
\renewcommand{\wr}{\operatorname{wr}}
\newcommand{\nbd}{\nobreakdash-}
\setlist[enumerate,1]{label={\normalfont(\roman*)}}
\begin{document}

\maketitle

\begin{abstract}
  Let $G_{1}$,~$G_{2}$, \dots\ be a sequence of groups each of which
  is either an alternating group, a symmetric group or a cyclic
  group.  Let us construct a sequence~$(W_{i})$ of wreath products via
  $W_{1} = G_{1}$ and, for each $i \geq 1$, $W_{i+1} = G_{i+1} \wr
  W_{i}$ via the natural permutation action.  We determine the minimum
  number~$d(W_{i})$ of generators required for each wreath product in
  this sequence.
\end{abstract}

\paragraph{2020 Mathematics Subject Classification:} 20E22, 20F05,
20D06, 20B05

\paragraph{Keywords:} generating sets, wreath products, finite groups,
alternating groups, symmetric groups, cyclic groups

\section{Introduction}

One common way to describe a finite group is to provide a set of
generators.  In view of this, it is unsurprising that there have been
many studies into the minimum number of generators required for
particular groups.  Famous examples include the fact, often taught
within a first course on group theory, that a finite symmetric group
can be generated by two permutations, while it is a consequence of the
Classification of Finite Simple Groups that a finite simple group
needs only two generators.  (Steinberg~\cite{Steinberg} establishes
this for the finite simple groups of Lie type, while Aschbacher and
Guralnick complete the work by dealing with the sporadic simple
groups, see~\cite[Theorem~B]{AschGur}.)

Another theme within the study of generators for finite groups is the
behaviour of the minimum number of generators required for various
sequences of finite groups.  For example, in a sequence of papers
beginning with~\cite{Wiegold}, Wiegold considers the behaviour of the
number of generators required for a direct product~$G^{n}$ of
$n$~copies of a group~$G$ as $n \to \infty$.  An alternative
construction that could be used instead of the direct product is the
wreath product constructed from suitable actions of the groups
involved.  We shall describe a few of such studies that are most
closely linked to the content of this article.

Bhattacharjee~\cite{Bhatt} considers the probability of generating an
iterated wreath product of non-abelian simple alternating groups by
two randomly chosen elements, where these wreath products are
constructed from their natural actions.  She shows that this
probability can be made arbitrarily close to~$1$ by choosing the first
alternating group used to have large degree.  In particular, this
implies that such an iterated wreath product requires only two
generators: her result assumes that the first alternating group has
sufficiently large degree but this hypothesis is not actually needed
to reach this conclusion.  The second author~\cite{MRQ2} generalized
Bhattarcharjee's work and showed the same holds for an iterated wreath
product constructed using any non-abelian simple groups with any
faithful transitive actions.  Bondarenko~\cite{Bond} considers
iterated wreath products~$W_{n}$ of finite transitive permutation
groups~$G_{i}$ with uniformly bounded number of generators and shows
that the number of generators for~$W_{n}$ is bounded if and only if
the abelian groups $G_{1}/G_{1}' \times G_{2}/G_{2}' \times \dots
\times G_{n}/G_{n}'$ have bounded number of generators.

More recently, East and Mitchell~\cite{EastMitch} show, using
relatively elementary methods, that the wreath product of two finite
symmetric or alternating groups can be generated by two elements.
They suggest an investigation of the minimal number of generators for
an iterated wreath product involving alternating and symmetric
groups.  The technology to determine the number of generators required
for an iterated wreath product of symmetric groups has actually
existed for some time.  Lucchini~\cite[Theorem~8]{Lucc97} describes
the number of generators required for a wreath product of an abelian
group by a symmetric group of degree~$n \geq 2$.  We use this theorem
as part of our argument and this on its own would be enough to
determine the number of generators required for an iterated wreath
product of symmetric groups.  Finally, we mention that Woryna
describes the number of generators required for an iterated wreath
product of cyclic groups and later an iterated product of abelian
groups, see~\cite[Theorem~1.1]{Woryna11}
and~\cite[Theorem~1.1]{Woryna12}.  Both of Woryna's results also make
use of the work of Lucchini~\cite{Lucc97} to establish the bounds for
the number of generators required.

In this article, we shall establish the precise number of generators
required for an iterated wreath product constructed from alternating
groups, symmetric groups and cyclic groups in their natural action.
In the case of a cyclic group of order~$n$, by this we mean the
regular action of the cyclic group on $n$~points.  If one were to take
only cyclic groups, then one recovers Woryna's theorem
\cite[Theorem~1.1]{Woryna11}.  Taking only alternating groups of
degree at least~$5$, one recovers the observation that such an
iterated wreath product is $2$\nbd generated, while taking only
alternating groups and symmetric groups gives an answer to the
question raised by East and Mitchell.  Consequently, our main theorem
can be viewed as a simultaneous extension of several of the results
that we have just described.

To state our theorem, we first recall some standard notation.  If
$G$~is a finite group, we write~$d(G)$ for the minimum number of
generators required for~$G$.  If $A$~is a finite abelian group, we
write~$d_{p}(A)$ for the minimum number of generators required for the
Sylow $p$\nbd subgroup of~$A$.  Thus, it is a straightforward
consequence of the classification of finite abelian groups that, for
such an abelian group~$A$, \ $d(A) = \max d_{p}(A)$ where the maximum
is taken over all prime numbers~$p$.  We shall establish the following
theorem:

\begin{THM}
  \label{thm:main}
  Let $k \geq 2$ and let $G_{1}$,~$G_{2}$, \dots,~$G_{k}$ be a
  sequence of non-trivial finite groups each of which is either an
  alternating group, a symmetric group or a cyclic group.  Let $W =
  G_{k} \wr G_{k-1} \wr \dots \wr G_{1}$ be the iterated wreath
  product constructed from the natural action of each~$G_{i}$.  Define
  $A$~to be the abelianization of the iterated wreath product $G_{k}
  \wr G_{k-1} \wr \dots \wr G_{2}$.  Then
  \[
  d(W) = \begin{cases}
    \max (2, d(A), d_{3}(A)+1 ) &\text{if $G_{1} = A_{4}$}, \\
    \max (2, d(A) ) &\text{if $G_{1} = A_{n}$ with $n \geq 5$}, \\
    \max (2, d(A), d_{2}(A) + 1) &\text{if $G_{1} = S_{n}$ with $n \geq 3$}, \\
    \max (2, d(A) + 1 ) &\text{if $G_{1}$~is cyclic}.
  \end{cases}
  \]
\end{THM}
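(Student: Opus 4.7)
The plan is to split the calculation of $d(W)$ into matching lower and upper bounds, reducing in each case to the question of $d(A \wr G_{1})$. Set $V = G_{k} \wr G_{k-1} \wr \dots \wr G_{2}$ and $W = V \wr G_{1}$, so that $W$~has base group $V^{n}$, where $n$ is the degree of $G_{1}$'s natural action. Killing the derived subgroup of each copy of~$V$ in the base gives a surjection $W \twoheadrightarrow A \wr G_{1}$, and hence $d(W) \geq d(A \wr G_{1})$. This reduces the lower bound to understanding wreath products of an abelian group by an alternating, symmetric, or cyclic group.

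For the lower bound, $d(W) \geq 2$ is automatic since $W$~is non-abelian. A short calculation shows that the abelianisation of $A \wr G_{1}$ is isomorphic to $A \oplus G_{1}/G_{1}'$ (using transitivity of the $G_{1}$\nbd action on the base), so
\[
d(W) \geq d(A \oplus G_{1}^{\mathrm{ab}}),
\]
which immediately delivers the extra $d_{3}(A) + 1$ when $G_{1} = A_{4}$ (so $G_{1}^{\mathrm{ab}} \cong C_{3}$) and the extra $d_{2}(A) + 1$ when $G_{1} = S_{n}$ (so $G_{1}^{\mathrm{ab}} \cong C_{2}$), while contributing nothing beyond $d(A)$ when $G_{1} = A_{n}$ with $n \geq 5$. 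The cyclic case $G_{1} = C_{n}$ is the subtlest: although the abelianisation only yields a weaker bound, a direct $\Field{p}[C_{n}]$\nbd module argument on $A \wr C_{n}$, localised at the prime $p$ realising $d(A) = d_{p}(A)$, shows that one generator must project nontrivially to~$C_{n}$ and hence contributes only a cyclic ($C_{n}$\nbd invariant) submodule of the base at the prime~$p$; the remaining generators are then forced to supply a further~$d(A)$ generators of the base, giving $d(W) \geq d(A) + 1$.

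For the upper bound we construct explicit generating sets of the stated size by ``stacking'' through the wreath-product structure. It suffices to produce elements of~$W$ whose projections to~$G_{1}$ generate~$G_{1}$ (requiring at most two elements, and one in the cyclic case) and whose base entries, together with their $G_{1}$\nbd translates, generate~$V^{n}$. Working modulo $[V,V]^{n}$, this reduces to generating $A^{n}$ as a $\Zint[G_{1}]$\nbd module, and one combines an optimal generating set of~$V$ (obtained inductively on~$k$) with the chosen generators of~$G_{1}$, distributing them across coordinates of $V^{n}$ to match the combinatorial bound dictated by Lucchini's Theorem~8 in the $S_{n}$ case and by a hands-on variant in the $A_{n}$, $A_{4}$, and $C_{n}$ cases.

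The main obstacle will be matching the upper bound exactly in the cases $G_{1} = A_{4}$ and $G_{1} = S_{n}$, where the formula $\max(d(A), d_{p}(A) + 1)$ demands a generating set in which one element plays a double role, contributing simultaneously to the $C_{p}$\nbd quotient of $G_{1}^{\mathrm{ab}}$ and to the Sylow $p$\nbd part of~$A$, while the remaining elements absorb the generators required at the other primes without exceeding $d(A)$ in total. In the symmetric case this can be achieved by lifting Lucchini's explicit optimal construction for $A \wr S_{n}$ through the surjection $W \twoheadrightarrow A \wr S_{n}$; in the $A_{4}$ case one needs an analogous construction by hand, exploiting the factorisation $A_{4} = V_{4} \rtimes C_{3}$ to align the $C_{3}$~part of $A_{4}^{\mathrm{ab}}$ with the 3\nbd part of~$A$.
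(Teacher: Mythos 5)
Your lower bound is sound and matches the paper's: the surjection $W \twoheadrightarrow A \wr G_{1}$ and the identification of the abelianization of $A \wr G_{1}$ with $A \times G_{1}/G_{1}'$ give exactly the stated lower bounds in the alternating and symmetric cases, and the cyclic case is covered by Lucchini's Corollary~4.4 (your module sketch for it is vague, but the fact is citable). The problem is the upper bound, where essentially all of the content of the theorem lives and where your proposal rests on a false sufficiency criterion. It is not true that a set of elements of $W = V^{n} \rtimes G_{1}$ generates $W$ whenever the projections generate $G_{1}$ and the base entries together with their $G_{1}$\nbd translates generate $V^{n}$: already in $C_{2} \wr C_{2}$ the single element $g = ((1,0),\sigma)$ satisfies both conditions but generates a cyclic group of order~$4$ inside a group of order~$8$. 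For the same reason, ``working modulo $[V,V]^{n}$'' does not reduce generation of $W$ to generation of $A^{n}$ as a $\Zint G_{1}$\nbd module plus generation of $G_{1}$: the assertion that $d(W) = \max(2, d(A \wr G_{1}))$, i.e.\ that one may pass to the quotient by the enormous non-abelian normal subgroup $(V')^{n}$ at no cost beyond the $\max(2,\cdot)$, is precisely the paper's Lemma~\ref{lem:induction} and is the hardest step. The paper proves it by induction on~$k$ using the Lucchini--Menegazzo theorem (for the unique minimal normal subgroup $A_{n}^{\Gamma}$ when $G_{k}$ is alternating or symmetric of degree $\geq 5$), Gruenberg's presentation-rank results, and Lucchini's augmentation-ideal formula $d(I_{H \wr G}) = \max\bigl( d(I_{(H/H')\wr G}), \lfloor (d(I_{H})-2)/n \rfloor + 2 \bigr)$, together with the careful estimate $d(H) \leq d(A \wr G_{1}) + 1$ needed to make the floor term harmless. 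None of this is replaced by anything in your sketch; note in particular that $d(V)$ can strictly exceed $d(W)$ (e.g.\ $d(C_{2}\wr C_{2}\wr C_{3}) = 3$ while $d(C_{2}\wr C_{2}\wr C_{3}\wr A_{5}) = 2$), so ``combining an optimal generating set of $V$ with generators of $G_{1}$'' cannot be done coordinatewise without exactly the machinery you omit.

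The second missing ingredient is the formula for $d(A \wr A_{n})$ with $A$ abelian, which you describe as ``a hands-on variant'' of Lucchini's Theorem~8. This is not in the literature and is not hands-on: even for abelian base the number of generators of $A \wr G_{1}$ is not governed by module generation of the base alone but by the parameters $\delta_{\bar W}(I_{p})$, $r_{\bar W}(I_{p})$ and $s_{\bar W}(I_{p})$, the last of which requires bounding $\dim \Cohom{A_{n},I_{p}}$. The paper devotes Section~\ref{sec:AwrAn} to this: identifying the complemented chief factors of $\bar A \wr A_{n}$ inside the base as copies of the deleted permutation module $I_{p}$ with $p \nmid n$, showing $\End{I_{p}} \cong \Field{p}$, and proving $\order{\Cohom{A_{n},I_{p}}} \leq p^{2}$ (with a separate treatment of $p = 3$ for $A_{4}$, which is exactly where the extra $d_{3}(A)+1$ in the $A_{4}$ case comes from). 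Your proposal correctly anticipates that the $A_{4}$ case needs its $C_{3}$ quotient aligned with the $3$\nbd part of $A$, but offers no mechanism for proving that $\max(2, d(A), d_{3}(A)+1)$ elements actually suffice. As it stands the proposal establishes the lower bounds and correctly identifies the reduction target $d(A \wr G_{1})$, but proves neither the reduction nor the target formula in the alternating cases.
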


The abelian group~$A$ appearing in the above statement has the form
\[
A = G_{k}/G_{k}' \times G_{k-1}/G_{k-1}' \times \dots \times G_{2}/G_{2}'
\]
and we know that each factor~$G_{i}/G_{i}'$ is trivial when $G_{i}$~is
an alternative group of degree at least~$5$, is cyclic of order~$2$
when $G_{i}$~is a symmetric group of degree at least~$3$, is cyclic of
order~$3$ when $G_{i} \cong A_{4}$, and is isomorphic to the
original~$G_{i}$ when this is a cyclic group.  Consequently, one can
readily determine the values of~$d(A)$ and~$d_{p}(A)$ required in the
statement simply by counting the number of groups of each type
appearing in the sequence $G_{2}$,~$G_{3}$, \dots,~$G_{k}$.  The
following corollary is deduced by performing this elementary
arithmetic.  It notes that the number of generators required for the
iterated wreath product~$W$ can be expressed by the same formula
irrespective of the choice of~$G_{1}$ provided that this is a
non-abelian alternating or symmetric group.

\begin{COR}
  \label{cor:CountFormula}
  Let $k \geq 2$ and $G_{1}$,~$G_{2}$, \dots,~$G_{k}$ be a sequence of
  non-trivial finite groups each of which is either an alternating
  group, a symmetric group or a cyclic group.  Suppose that $G_{1}$~is
  \textbf{not} cyclic and define the following parameters:
  \begin{enumerate}
  \item $a_{4}$~is the number of terms $G_{1}$,~$G_{2}$,
    \dots,~$G_{k}$ that are alternating groups of degree~$4$;
  \item $s$~is the number of terms that are non-abelian symmetric
    groups; and,
  \item for each prime~$p$, $c_{p}$~is the number of terms that are
    cyclic groups of order divisible by~$p$.
  \end{enumerate}
  Let $W = G_{k} \wr G_{k-1} \wr \dots \wr G_{1}$ be the iterated
  wreath product constructed from the natural action of each~$G_{i}$.
  Then
  \[
  d(W) = \max(2, d(W/W')) = \max_{p} ( 2, c_{2}+s, c_{3}+a_{4}, c_{p} ),
  \]
  where the maximum is taken over all primes~$p$.
\end{COR}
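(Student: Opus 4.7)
The plan is to deduce the corollary directly from Theorem~\ref{thm:main} by rewriting the invariants $d(A)$ and $d_{p}(A)$ appearing on its right-hand side in terms of the global counts $a_{4}$, $s$, $c_{p}$. Since the abelianization of a wreath product $H \wr K$ is $H/H' \times K/K'$, a straightforward induction on~$k$ yields $A \cong \prod_{i=2}^{k} G_{i}/G_{i}'$. Reading off the abelianizations of the allowed factors as listed in the paragraph preceding the corollary (trivial for $A_{n}$ with $n \geq 5$, cyclic of order~$3$ for $A_{4}$, cyclic of order~$2$ for $S_{n}$ with $n \geq 3$, and $G_{i}$ itself when $G_{i}$ is cyclic), I would write $a_{4}'$, $s'$, $c_{p}'$ for the restrictions of the corollary's parameters to the range $i \geq 2$ and observe that $d_{2}(A) = s' + c_{2}'$, that $d_{3}(A) = a_{4}' + c_{3}'$, and that $d_{p}(A) = c_{p}'$ for every prime $p \geq 5$.

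I would then run through the three types of $G_{1}$ permitted by the hypothesis. Because $G_{1}$ is not cyclic, $c_{p}' = c_{p}$ for every~$p$, while $a_{4}'$ and~$s'$ differ from $a_{4}$ and~$s$ by at most~$1$, depending on the isomorphism type of~$G_{1}$. When $G_{1} = A_{n}$ with $n \geq 5$ no adjustment is needed, and Theorem~\ref{thm:main} immediately yields the stated formula. When $G_{1} = A_{4}$ we have $a_{4}' = a_{4}-1$, and the extra term $d_{3}(A)+1 = a_{4}+c_{3}$ appearing in Theorem~\ref{thm:main} exactly promotes the understated contribution $a_{4}' + c_{3}$ inside $d(A)$ up to its correct global value. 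The case $G_{1} = S_{n}$ with $n \geq 3$ is analogous: the term $d_{2}(A)+1$ upgrades $s' + c_{2}$ to $s + c_{2}$.

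In every case the output therefore reduces to $\max(2,\, s + c_{2},\, a_{4} + c_{3},\, \max_{p \geq 5} c_{p})$, which is the same as $\max_{p}(2, c_{2}+s, c_{3}+a_{4}, c_{p})$ because the raw contributions $c_{2}$ and~$c_{3}$ at the primes $p = 2,3$ are absorbed by the combined terms $s+c_{2}$ and $a_{4}+c_{3}$. The first equality $d(W) = \max(2, d(W/W'))$ is then immediate from the same abelianization formula: $W/W' \cong G_{1}/G_{1}' \times A$ has $2$-rank $s+c_{2}$, $3$-rank $a_{4}+c_{3}$ and $p$-rank $c_{p}$ for each $p \geq 5$, matching the quantity just computed.

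There is no genuine obstacle in this argument; the whole proof is elementary bookkeeping. The one point demanding a little care is the $\pm 1$ adjustment that accounts for the difference between counting over the range $1 \leq i \leq k$ and counting only over $2 \leq i \leq k$, and verifying in each of the three non-cyclic cases of Theorem~\ref{thm:main} that the additional summand $d_{p}(A)+1$, or its absence, is exactly what is required to absorb that offset.
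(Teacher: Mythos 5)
Your proposal is correct and follows essentially the same route as the paper, which likewise computes $d_{2}(W/W') = c_{2}+s$, $d_{3}(W/W') = c_{3}+a_{4}$ and $d_{p}(W/W') = c_{p}$ for $p \geq 5$ and then leaves the case analysis on $G_{1}$ as ``careful consideration of the formula given in Theorem~\ref{thm:main}.'' Your write-up simply makes explicit the $\pm 1$ bookkeeping (the passage from the primed counts over $i \geq 2$ to the global counts) that the paper omits, and all of those adjustments check out.
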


Indeed, note that, expressed in terms of the parameters given in
Corollary~\ref{cor:CountFormula},
\[
d_{2}(W/W') = c_{2}+s, \qquad d_{3}(W/W') = c_{3}+a_{4}, \qquad
d_{p}(W/W') = c_{p}
\]
for $p \geq 5$.  Hence $d(W/W') = \max(c_{2}+s, c_{3}+a_{4}, c_{p})$
immediately.  The rest of the corollary is established by careful
consideration of the formula given in Theorem~\ref{thm:main}.

We finish this introduction by describing the structure of the paper.
In Section~\ref{sec:prelims}, we recall the definition of the wreath
product and fix the notation that we use.  We also state some results
that we depend upon, particularly including some important facts from
Lucchini's work~\cite{Lucc97}.  Our proof uses the methods
within~\cite{Lucc97}, but extends them to cover the case of a wreath
product of an abelian group by an alternating group.  We devote
Section~\ref{sec:AwrAn} to developing the necessary theory to
establish the number of generators of such a wreath product.  We prove
our main theorem in the final section of the article.  We do this by
determining a formula for the wreath product~$A \wr G_{1}$ and
establishing that $d(W) = \max(2, d(A \wr G_{1}))$, where $A$~and~$W$
are as in Theorem~\ref{thm:main} (see
Lemmas~\ref{lem:keylemma-abelian} and~\ref{lem:induction} below).

\paragraph{Acknowledgements:} The first author is funded by the China
Scholarship Council.  We thank the anonymous referee for their helpful
suggestions.

In order to meet institutional and research funder open access
requirements, any accepted manuscript arising shall be open access
under a Creative Commons Attribution (CC BY) reuse licence with zero
embargo.

\section{Preliminaries}
\label{sec:prelims}

In this section, we fix the notation that we use and summarize the
facts from previous work that we rely upon.  We start by recalling the
definition of a wreath product and associated notation.

Let $G$~and~$H$ be permutation groups on the finite sets $X$~and~$Y$,
respectively.  Let $B = \prod_{y \in Y} G_{y}$ be the direct product
of copies of~$G$ indexed by the set~$Y$.  We shall write $b =
(g_{y}) = (g_{y})_{y \in Y}$ for an element of~$B$ expressed as a
sequence of elements from~$G$ indexed by~$Y$.  We define an action
of~$H$ on~$B$ by permuting the entries of its elements in the same way
that $H$~acts on~$Y$:
\[
(g_{y})^{h} = (g_{yh^{-1}})
\]
for $b = (g_{y}) \in B$ and $h \in H$.  The (permutational) wreath
product $W = G \wr_{Y} H$ of~$G$ by~$H$ is the semidirect product $W =
B \rtimes H$ constructed via this action.  In this article, we shall
construct wreath products from natural actions specified for the
ingredients $G$~and~$H$.  Consequently, we shall typically omit the
subscript on the wreath product and simply write $W = G \wr H$ for the
wreath product of~$G$ by~$H$ constructed via the natural action of the
permutation group~$H$.

The wreath product $W = G \wr H$ has a natural action on the Cartesian
product~$X \times Y$ given by
\[
(x,y)^{g} = (x^{g_{y}}, y^{h})
\]
for $(x,y) \in X \times Y$ and $g = (g_{y})h \in W$.  As a
consequence, if $G_{1}$,~$G_{2}$, \dots,~$G_{k}$ is a sequence of
permutation groups acting on the sets $X_{1}$,~$X_{2}$,
\dots,~$X_{k}$, respectively, then we may iterate the wreath product
construction exploiting this induced action.  Thus we recursively
define $W_{1} = G_{1}$ acting upon the set~$X_{1}$ and then, having
defined~$W_{i-1}$ with an action on $X_{i-1} \times \dots \times
X_{1}$, we define $W_{i} = G_{i} \wr W_{i-1}$ with its action on
$X_{i} \times X_{i-1} \times \dots \times X_{1}$.  It is this iterated
wreath product that we employ in our main theorem.

Recall that $d(G)$~denotes the minimal number of generators of a
finite group~$G$.  We note the following important observation.

\begin{thm}[Lucchini--Menegazzo \cite{LuccMen}]
  \label{thm:LuccMen}
  If $G$~is a non-cyclic finite group with a unique minimal normal
  subgroup~$N$, then $d(G) = \max(2, d(G/N))$.
\end{thm}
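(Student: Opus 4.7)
The inequality $d(G) \geq \max(2, d(G/N))$ is immediate: the quotient map $G \to G/N$ sends any generating set to a generating set, and $d(G) \geq 2$ since $G$ is non-cyclic. The content of the theorem lies in the reverse bound, which I would establish by exhibiting a generating set of size $d := \max(2, d(G/N))$.

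My first step would be to split according to whether $N$ is contained in the Frattini subgroup $\Phi(G)$. If $N \leq \Phi(G)$, then any lift to $G$ of a generating set of $G/N$ generates modulo $\Phi(G)$ and therefore generates $G$, by the defining property of $\Phi(G)$ as the set of non-generators. Moreover, $G/N$ must itself be non-cyclic in this case, since otherwise the lift of a single generator would generate $G$, contradicting non-cyclicity of $G$. Hence $d(G/N) \geq 2$ and the bound $d(G) \leq d(G/N) = d$ follows.

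Suppose instead that $N \not\leq \Phi(G)$, so that some maximal subgroup $M$ satisfies $MN = G$. Uniqueness of $N$ as a minimal normal subgroup forces $M \cap N = 1$: when $N$ is abelian, $M \cap N$ is normalized by both $M$ and $N$, hence is a normal subgroup of $G = MN$ properly contained in $N$ and so trivial by minimality of $N$; when $N$ is non-abelian, one writes $N = T_{1} \times \dots \times T_{m}$ for isomorphic non-abelian simple groups $T_{i}$ on which $G$ acts transitively, and a parallel analysis (using that any nontrivial $G$-invariant subgroup of $N$ would contain $N$) yields the same conclusion. Thus $G = N \rtimes M$ with $M \cong G/N$, and it remains to find $d$ elements generating this semidirect product.

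This last step is the technical heart of the argument. For abelian $N$ I would invoke Gaschütz's lemma: viewing $N$ as an $M$-module, one takes a generating $d$-tuple $(m_{1}, \dots, m_{d})$ of $M$ (possible as $d \geq d(M)$) and produces $n_{1}, \dots, n_{d} \in N$ with $\langle m_{1}n_{1}, \dots, m_{d}n_{d} \rangle = G$; the extra slot afforded by the condition $d \geq 2$ supplies the flexibility needed to overcome the cohomological obstruction when $d(G/N) = 1$. For non-abelian $N$ Gaschütz's lemma does not apply, and I expect this to be the main obstacle: one must argue directly, using that $M$ permutes the factors $T_{i}$ transitively (again by uniqueness of $N$) and that each $T_{i}$ is $2$-generated, to modify a generating $d$-tuple of $M$ by elements of $N$ so that the resulting tuple projects onto each $T_{i}$ and so generates the whole semidirect product.
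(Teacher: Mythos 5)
This statement is not proved in the paper at all: it is quoted verbatim from Lucchini--Menegazzo \cite{LuccMen} and used as a black box, so there is no ``paper proof'' to compare against. Judged on its own terms, your proposal has a genuine gap in the non-abelian case.

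Your lower bound and your treatment of the case $N \leq \Frat{G}$ are fine, and in the complemented abelian case the appeal to Gasch\"utz-type arguments is at least pointing in the right direction (though the claim that the slack from $d \geq 2$ ``overcomes the cohomological obstruction'' is itself the hard content there, resting on the Aschbacher--Guralnick bound $\dim \Cohom{G/\Cent{G}{N},N} < \dim N$). The real problem is the step where you assert that uniqueness of $N$ forces $M \cap N = \trivsubgp$ for a maximal supplement $M$ when $N$ is non-abelian. This is false: $M \cap N$ is normal in $M$ but is not normalized by $N$ (precisely because $N$ is non-abelian), so it need not be normal in $G$ and minimality of $N$ gives you nothing. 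Concretely, $G = S_{5}$ has unique minimal normal subgroup $N = A_{5}$ and maximal subgroup $M = S_{4}$ with $MN = G$ but $M \cap N = A_{4} \neq \trivsubgp$; more drastically, if $G$ is itself a non-abelian simple group then $N = G$ and \emph{every} maximal subgroup $M$ satisfies $MN = G$ with $M \cap N = M$. So the reduction to a semidirect product $N \rtimes M$ fails, and the subsequent plan of ``modifying a generating tuple of $M$ so that it projects onto each $T_{i}$'' does not get off the ground (and would in any case be insufficient: surjecting onto each simple factor does not imply generation).

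This is not a repairable local slip. Specializing the theorem to $N = G$ simple already yields the statement that every finite non-abelian simple group is $2$-generated, which is known only as a consequence of the Classification of Finite Simple Groups (as the paper itself notes in its introduction, citing Steinberg and Aschbacher--Guralnick). The non-abelian case of Lucchini--Menegazzo likewise rests on CFSG-dependent input about generation of groups with non-abelian socle, so no elementary argument of the kind you sketch can close this gap.
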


As is common, if $G$~is any group, we shall use the term \emph{$G$\nbd
module} for an additively written abelian group~$M$ upon which
$G$~acts via automorphisms.  Such a $G$\nbd module can also be
considered as a module for the group ring~$\Zint G$.  If $M$~arises as
an abelian chief factor of the group~$G$, then it necessarily has
exponent~$p$ for some prime~$p$ and therefore can also be viewed as a
module for the group algebra~$\Field{p}G$ over the finite
field~$\Field{p}$ of $p$~elements.  We write~$I_{G}$ for the
\emph{augmentation ideal} of the group ring~$\Zint G$; that is, the
kernel of the natural map $\Zint G \to \Zint$ induced by mapping every
element of~$G$ to~$1$.

As with Lucchini's earlier work~\cite{Lucc97} on generation of wreath
products, we shall use the \emph{presentation rank}~$\pr{G}$ to
provide information about the number of generators of a group~$G$.
This is an invariant of the group that arises via the decomposition of
the relation module associated to a minimal free presentation of~$G$.
It follows from the definition that $\pr{G}$~is a non-negative
integer, though this can also be deduced from
Lemma~\ref{lem:Roggenkamp} below since it is straightforward to check
$d(I_{G}) \leq d(G)$.  In our work, we shall rely upon the following
collection of facts, the first of which allows us to link~$d(G)$ to
the number of generators of the augmentation ideal~$I_{G}$ as a $\Zint
G$\nbd module.

\begin{lemma}[Roggenkamp~\protect{\cite[Theorem~2.1]{Rogg}}]
  \label{lem:Roggenkamp}
  Let $G$~be a finite group.  Then $\pr{G} = d(G) - d(I_{G})$.
\end{lemma}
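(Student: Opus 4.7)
The plan is to exploit the Gruenberg exact sequence coming from a minimal free presentation of~$G$.  I would start by choosing such a presentation $1 \to R \to F \to G \to 1$ with $F$ free of rank $d = d(G)$, and forming the relation module $\overline{R} = R/[F,R]$ with its natural $\Zint G$\nbd module structure.  Standard homological arguments (equivalently, Fox calculus applied to the presentation) produce the short exact sequence of $\Zint G$\nbd modules
\[
0 \to \overline{R} \to (\Zint G)^{d} \to I_{G} \to 0,
\]
in which the middle term is free of rank~$d$ with basis corresponding to the chosen free generators of~$F$.

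Next, I would invoke the definition of presentation rank: $\pr{G}$ is the largest integer~$r$ such that $\overline{R}$ has $(\Zint G)^{r}$ as a direct summand.  Writing $\overline{R} = M \oplus (\Zint G)^{r}$ with $r = \pr{G}$ and $M$ having no free direct summand, the free summand is projective, so a diagram chase permits one to trim the sequence above to
\[
0 \to M \to (\Zint G)^{d-r} \to I_{G} \to 0,
\]
exhibiting $I_{G}$ as generated by $d - r$ elements and yielding $d(I_{G}) \leq d(G) - \pr{G}$.

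For the reverse inequality, I would pick a minimal generating set of $I_{G}$ of size $k = d(I_{G})$, obtaining an exact sequence $0 \to N \to (\Zint G)^{k} \to I_{G} \to 0$, and apply Schanuel's lemma to this and to the Gruenberg sequence.  This gives $\overline{R} \oplus (\Zint G)^{k} \cong N \oplus (\Zint G)^{d}$; cancelling free summands would force $\overline{R}$ to acquire a free summand of rank at least $d - k$, whence $\pr{G} \geq d(G) - d(I_{G})$.

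The main obstacle will be this final free-summand cancellation.  Passing from the Schanuel isomorphism $\overline{R} \oplus (\Zint G)^{k} \cong N \oplus (\Zint G)^{d}$ to the existence of a genuine $(\Zint G)^{d-k}$ summand of~$\overline{R}$ is not formal, since finitely generated $\Zint G$\nbd modules do not in general enjoy Krull--Schmidt cancellation.  The resolution is to invoke Swan's cancellation theorem for finitely generated projective $\Zint G$\nbd modules over a finite group, which is the substantive ingredient underpinning Roggenkamp's result.
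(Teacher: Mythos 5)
The paper offers no proof of this lemma: it is quoted verbatim from Roggenkamp \cite[Theorem~2.1]{Rogg} and used as a black box, so there is no in-paper argument to compare yours against. Judged on its own terms, your outline has the right skeleton --- the Gruenberg sequence $0 \to \overline{R} \to (\Zint G)^{d} \to I_{G} \to 0$, the definition of $\pr{G}$ as the maximal free rank split off the relation module, and Schanuel's lemma --- and you correctly sense that the crux is a non-formal splitting statement for $\Zint G$\nbd lattices. But two steps do not go through as written. First, a small slip: the relation module is $R/[R,R]$ with $G = F/R$ acting by conjugation, not $R/[F,R]$ (the latter is the coinvariant quotient, on which the action is trivial). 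Second, the ``trimming'' in the easy direction is not a diagram chase: an embedded free submodule of a free $\Zint G$\nbd module need not be a direct summand of it. One can rescue this step --- the quotient $(\Zint G)^{d}/(\Zint G)^{r}$ is an extension of $I_{G}$ by $M$, hence a lattice, and $\mathrm{Ext}^{1}_{\Zint G}(L,\Zint G)=0$ for every lattice $L$ since $\Zint G$ is induced, so the free summand does split off --- but even then the complement is only \emph{stably} free of rank $d-r$, and you still need Bass's theorem that stably free modules of rank at least $2$ over an order are free before you can read off $d(I_{G}) \leq d - r$.

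The more serious gap is the final cancellation. ``Swan's cancellation theorem for finitely generated projective $\Zint G$\nbd modules'' is not an available tool: Swan's name is attached to the discovery that such cancellation \emph{fails} for general finite $G$ (there are stably free, non-free projectives over the integral group rings of suitable generalized quaternion groups), Jacobinski's cancellation theorem requires the Eichler condition, and in any case neither $\overline{R}$ nor your module $N$ is projective, so theorems about projectives do not apply to the Schanuel isomorphism $\overline{R} \oplus (\Zint G)^{k} \cong N \oplus (\Zint G)^{d}$. The substantive input has to come instead from the Bass--Swan--Roiter theory of lattices over orders --- the local-to-global control of minimal generator numbers and of splitting free summands off a lattice whose local ranks are large enough, which is the same machinery underlying the formula (2.10) of \cite{AschGur} quoted in Section~\ref{sec:prelims}. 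So you have located where the real content of Roggenkamp's theorem sits, but the theorem you reach for to supply that content is misattributed and, as stated, false.
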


\begin{lemma}[Gruenberg~\cite{Gru76}]
  \label{lem:Gruenberg}
  Let $G$~be a finite group.
  \begin{enumerate}
  \item \label{i:quot-pr}
    {\normalfont \cite[(B.ii)]{Gru76}} If $\pr{G} > 0$ and $N$~is
    a soluble normal subgroup of~$G$, then $d(G) = d(G/N)$.
  \item \label{i:soluble-pr}
    {\normalfont \cite[(2.3)]{Gru76}} If $G$~is soluble, then $\pr{G}
    = 0$.
  \end{enumerate}
\end{lemma}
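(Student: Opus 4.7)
The plan is to prove both parts using the characterization of the presentation rank via minimal free presentations combined with Roggenkamp's formula $\pr{G} = d(G) - d(I_{G})$ from Lemma~\ref{lem:Roggenkamp}. Throughout, I would fix a minimal free presentation $1 \to R \to F \to G \to 1$ with $F$ free of rank~$d(G)$, and work with the short exact sequence of $\Zint G$\nbd modules
\[
0 \to R/[R,R] \to (\Zint G)^{d(G)} \to I_{G} \to 0,
\]
so that the relation module $R/[R,R]$ is determined up to projective (hence free, for finite~$G$) summands by $I_{G}$, by Schanuel's lemma. The presentation rank then measures the number of free $\Zint G$\nbd summands of~$R/[R,R]$ beyond those forced by this sequence.

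For part~\ref{i:soluble-pr}, I would induct on~$\order{G}$. In the base case $G$~is cyclic of order~$n$, generated by~$g$, so $I_{G}$ is generated as a $\Zint G$\nbd module by $g-1$, giving $d(I_{G}) = 1 = d(G)$, hence $\pr{G} = 0$. Before tackling the inductive step, I would first verify the claim directly for all finite abelian~$G$: using the identification $I_{G}/I_{G}^{2} \cong G$ as abelian groups, we have $d(I_{G}) \geq d(G)$, while the reverse inequality is trivial, so $\pr{G} = 0$. For the inductive step with $G$~soluble non-abelian, I would take a minimal normal subgroup~$N$ (elementary abelian of some exponent~$p$) and compare $d(I_{G})$ with $d(I_{G/N})$ via the exact sequence $0 \to I_{G} N \to I_{G} \to I_{G/N} \to 0$, combined with the inductive hypothesis $\pr{G/N} = 0$ and a cohomological computation of the extra contribution coming from the $\Field{p} (G/N)$\nbd module~$N$.

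For part~\ref{i:quot-pr}, I would reduce by induction on~$\order{N}$ to the case where $N$~is a minimal normal subgroup of~$G$, which (since it lies inside the soluble radical) is elementary abelian. The aim is to prove $d(G) \leq d(G/N)$, since the reverse inequality is automatic. By Gasch\"utz's lemma, any generating set of~$G/N$ of size $d \geq d(G)$ lifts to a generating set of~$G$ of the same size, so it suffices to establish $d(G/N) \geq d(G)$. This is exactly where the hypothesis $\pr{G} > 0$ is used: the assumption gives a free $\Zint G$\nbd summand in~$R/[R,R]$, and this extra slack in the minimal free presentation of~$G$ translates, via the functoriality of the relation module under the quotient map $G \to G/N$, into the inequality $d(I_{G/N}) \geq d(I_{G}) = d(G) - \pr{G} \geq d(G) - 1 + 1$, which together with $d(G/N) \geq d(I_{G/N})$ yields the required bound.

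The main obstacle is the inductive step of part~\ref{i:soluble-pr}: one must control exactly how the free summands of the relation module behave under an abelian extension $1 \to N \to G \to G/N \to 1$. Tracking this requires identifying the relevant piece of $\operatorname{Ext}^{1}_{\Zint(G/N)}(I_{G/N}, N)$ and showing that, when $G/N$ is soluble with $\pr{G/N} = 0$, no new free summand is introduced in passing to~$G$. Once this comparison is in place, part~\ref{i:quot-pr} follows from a relatively formal application of Gasch\"utz's lemma and the machinery already assembled, so I expect the cohomological bookkeeping in~\ref{i:soluble-pr} to be the substantive hurdle.
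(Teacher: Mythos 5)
The paper offers no proof of this lemma: both parts are imported verbatim from Gruenberg~\cite{Gru76} (items~(B.ii) and~(2.3) there), so there is nothing internal to compare your argument against; it must stand on its own, and as written it does not. In part~\ref{i:soluble-pr} the entire content of the statement lives in the inductive step that you defer. What has to be shown is $d(I_{G}) \geq d(G)$ for soluble non-abelian~$G$, and the inductive hypothesis $d(I_{G/N}) = d(G/N)$ is genuinely insufficient because $d(G)$ routinely exceeds $d(G/N)$ for soluble groups (already $G = C_{2} \times C_{2}$ with $N$~one factor). The extra generators forced on~$G$ by the complemented chief factors inside~$N$ must be detected inside~$I_{G}$, and carrying that out --- essentially matching Gasch\"utz's generator formula for soluble groups against the corresponding formula for $d(I_{G})$ in terms of chief factors --- \emph{is} the theorem. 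Announcing that it will follow from ``a cohomological computation'' and ``identifying the relevant piece of $\operatorname{Ext}^{1}$'' names the obstacle without removing it. (A smaller slip in your setup: finitely generated projective $\Zint G$\nbd modules need \emph{not} be free for finite~$G$, so ``projective, hence free'' is false; Swan's results only give local freeness.)

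Part~\ref{i:quot-pr} contains outright errors. The appeal to Gasch\"utz's lemma is circular: you reduce ``$d(G) \leq d(G/N)$'' to ``$d(G/N) \geq d(G)$'', which is the same inequality, so the lemma buys nothing. The inequality $d(I_{G/N}) \geq d(I_{G})$ points the wrong way: the ring surjection $\Zint G \to \Zint(G/N)$ carries $I_{G}$ onto $I_{G/N}$, so $d(I_{G/N}) \leq d(I_{G})$, and no functorial map gives the reverse bound. Finally, the chain ``$d(I_{G}) = d(G) - \pr{G} \geq d(G) - 1 + 1$'' asserts $\pr{G} \leq 0$, contradicting the standing hypothesis $\pr{G} > 0$; even repaired, $d(G/N) \geq d(I_{G/N})$ combined with anything of the form $d(I_{G/N}) \geq d(G) - \pr{G}$ only yields $d(G/N) \geq d(G) - \pr{G}$, which is strictly weaker than the claim. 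The hypothesis $\pr{G} > 0$ has to enter through the survival of a free summand of the relation module modulo the soluble normal subgroup, not through this arithmetic. For the purposes of this article the right course is the one the authors take: cite Gruenberg rather than reprove him.
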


As noted in the introduction, we rely upon the methods and results
developed in Lucchini's work~\cite{Lucc97}.  We describe here the
required notation and, for convenience, recall the facts that we use.

A chief factor of a finite group~$G$ is the quotient~$H/K$ of normal
subgroups $H$~and~$K$ with $K < H$ and such that there is no normal
subgroup~$N$ of~$G$ satisfying $K < N < H$.  We shall say that
$H/K$~is a \emph{complemented} chief factor if there exists a
subgroup~$C$ such that $G = HC$ and $H \cap C = K$; that is, when
$C/K$~is a complement to the normal subgroup~$H/K$ in the quotient
group~$G/K$.  This concept arises in the following definition of a
parameter associated to the group~$G$ and a $G$\nbd module~$M$.  For
such a $G$\nbd module~$M$, we define $\delta_{G}(M)$~to be the number
of (abelian) chief factors of~$G$ that are isomorphic to~$M$ as
$G$\nbd modules and are complemented in a chief series for~$G$.  If an
abelian chief factor~$H/K$ of~$G$ is complemented, then the above
complement~$C$ may be taken to be a maximal subgroup of~$G$.  It then
follows by~\cite[Theorem~2.1]{Lafuente} that $\delta_{G}(M)$~does not
depend upon the choice of chief series for~$G$.

Recall, for example from~\cite[Chapter~11]{Robinson}, that if a
group~$G$ acts via automorphisms upon an additively-written abelian
group~$A$ then the first cohomology group~$\Cohom{G,A}$ is equal to
the quotient of the group~$\Der{G,A}$ of derivations $G \to A$ by the
group~$\Inn{G,A}$ of inner derivations.  A derivation $\delta \colon G
\to A$ is a function such that $(gh)^{\delta} = (g^{\delta})^{h} +
h^{\delta}$ for all $g,h \in G$, while an inner derivation is one of
the form $g \mapsto a - a^{g}$ for fixed $a \in A$.  We shall make use
of this technology when $A$~is a $G$\nbd module~$M$ that is isomorphic
to a chief factor of~$G$.

Such a $G$\nbd module $M$~is an irreducible $\Field{p}G$\nbd module
where $p$~is the exponent of the chief factor to which it is
isomorphic.  By Schur's Lemma, the endomorphism ring~$\End{M}$ is a
division ring containing all scalar multiples of the identity.  It is
therefore, by finiteness, isomorphic to some finite extension
of~$\Field{p}$.  The natural action of~$\End{M}$ upon~$M$ then endows
it with the structure of a vector space over this larger field.
Furthermore, the action of~$\End{M}$ induces an action on both the
groups $\Der{G,M}$ of derivations and $\Inn{G,M}$ of inner derivations
and hence the first cohomology group~$\Cohom{G,M}$ also has the
structure of a vector space over~$\End{M}$.  In this context,
Lucchini~\cite{Lucc97} defines the following parameters:
\begin{align}
  r_{G}(M) &= \dim_{\End{M}}M \notag \\
  s_{G}(M) &= \dim_{\End{M}}\Cohom{G,M} \notag \\
  h_{G}(M) &= \left\lfloor \frac{s_{G}(M)-1}{r_{G}(M)} \right\rfloor +
  2 \label{eq:h_G}
\end{align}
The first two parameters are the exponents arising when the orders
of~$M$ and of~$\Cohom{G,M}$ are expressed as powers
of~$\order{\End{M}}$.  For example, the formula
from~\cite[(1.2)]{Lucc97} that we quote in the next lemma arises by
considering the terms within~\cite[(2.10)]{AschGur} expressed as such
powers.  When we make use of these parameters in our work below, we
shall actually show that, for the $G$\nbd modules~$M$ that matter to
us, the endomorphism ring~$\End{M}$ actually consists precisely of the
scalar multiples of the identity.  Hence, in these cases,
$r_{G}(M)$~is simply the dimension of~$M$ and we estimate~$s_{G}(M)$
by bounding the number of derivations.

Finally in this section, we collect together from~\cite{Lucc97}
various facts that we shall use within the following portmanteau result.

\begin{lemma}[Lucchini~\cite{Lucc97}]
  \label{lem:Lucchini-facts}
  \begin{enumerate}
  \item \label{i:Lucc-augment}
    {\normalfont \cite[Proposition~1]{Lucc97}}
    Let $H$~be a finite group and $G$~be a transitive permutation
    group of degree~$n$.  Then
    \[
    d(I_{H \wr G}) = \max \left( d( I_{(H/H') \wr G}), \left\lfloor
    \frac{d(I_{H}) - 2}{n} \right\rfloor + 2 \right).
    \]
  \item \label{i:Lucc-solwreath}
    {\normalfont \cite[Theorem~2]{Lucc97}}
    Let $H$~be a finite soluble group and $G$~be a transitive
    permutation group of degree~$n$.  Then
    \[
    d(H \wr G) = \max \left( d( (H/H') \wr G ), \left\lfloor
    \frac{d(H) - 2 }{n} \right\rfloor + 2 \right).
    \]

  \item \label{i:Lucchini-s}
    {\normalfont \cite[(1.2)]{Lucc97}}
    Let $M$~be an irreducible $G$\nbd module for a finite group~$G$.
    Then
    \[
    s_{G}(M) = \delta_{G}(M) + \dim_{\End{M}} \Cohom{G/\Cent{G}{M},M}.
    \]

  \item \label{i:Lucchini-commquot}
    {\normalfont \cite[(1.5)]{Lucc97}}
    Let $N$~be a normal subgroup of a finite group~$G$ with $N \leq
    G'$.  Then
    \[
    d(I_{G}) \leq \max_{M} (2, d(I_{G/N}), h_{G}(M) )
    \]
    where $M$~ranges over the non-trivial irreducible $G$\nbd modules
    such that $\delta_{G/N}(M) < \delta_{G}(M)$.
  \end{enumerate}
\end{lemma}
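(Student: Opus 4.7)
The plan for Lemma~\ref{lem:Lucchini-facts} is essentially trivial: each of the four parts is a quotation, up to notation, from Lucchini's paper~\cite{Lucc97}, with explicit pointers (Proposition~1, Theorem~2, and formulae~(1.2) and~(1.5)) already supplied inside the statement. My proposal is therefore just to verify that the cited locations in~\cite{Lucc97} say what the lemma asserts, after translating into the notation fixed earlier in this section for wreath products, augmentation ideals, complemented chief factors, and the parameters $r_{G}(M)$,~$s_{G}(M)$,~$h_{G}(M)$. The lemma exists for organisational convenience so that later sections may invoke a single labelled result.

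Were one to reprove the statements from scratch, the pivotal task would be part~\ref{i:Lucc-augment}. The strategy I would adopt is to analyse $I_{H \wr G}$ as a $\Zint(H \wr G)$\nbd module via the base group $B = \prod_{y \in Y} H_{y}$. First I would filter $I_{H \wr G}$ by the submodule generated by~$I_{B}$: the top quotient maps naturally onto~$I_{G}$, producing the universal constant of~$2$ inside the floor expression, while the lower piece is induced from~$I_{H}$ along the transitive action of~$G$ on the $n$~copies of~$H$. A Nakayama-style minimisation then converts $d(I_{H})$ generators spread across the $n$~coordinates into the term $\lfloor (d(I_{H}) - 2)/n \rfloor + 2$, while the abelianisation of the base contributes the separate $d(I_{(H/H')\wr G})$ quantity; the maximum of the two accounts for both effects without double counting.

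Part~\ref{i:Lucc-solwreath} then reduces to part~\ref{i:Lucc-augment} via Lemmas~\ref{lem:Roggenkamp} and~\ref{lem:Gruenberg}\ref{i:soluble-pr}: solubility forces $\pr{H} = 0$ and hence $d(H) = d(I_{H})$, with the same identification applying to $(H/H')\wr G$. Part~\ref{i:Lucchini-s} is an Aschbacher--Guralnick style decomposition of~$\Cohom{G,M}$ re-expressed as dimensions over~$\End{M}$: derivations factoring through $G/\Cent{G}{M}$ give the cohomological summand, while complemented chief factors isomorphic to~$M$ contribute the term $\delta_{G}(M)$. Finally, part~\ref{i:Lucchini-commquot} follows by induction along a chief series of~$N$: one lifts generators of $I_{G/N}$ and supplements them with at most $h_{G}(M)$ further generators for each new complemented $M$\nbd isomorphic chief factor occurring inside~$N$, the quantity $h_{G}(M)$ from~\eqref{eq:h_G} being calibrated precisely for this purpose. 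The main obstacle in any from-scratch reproof remains part~\ref{i:Lucc-augment}, whose wreath-product module analysis is the technical heart of~\cite{Lucc97}; the other three parts become relatively standard homological exercises once that is in hand.
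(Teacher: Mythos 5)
Your proposal matches the paper exactly: the paper supplies no proof of Lemma~\ref{lem:Lucchini-facts} at all, presenting it purely as a portmanteau of results quoted from Lucchini~\cite{Lucc97} with the precise locations given in the statement, which is precisely the treatment you describe. Your additional sketches of how one might reprove the four parts from scratch are plausible outlines of Lucchini's arguments but go beyond anything the paper attempts or requires.
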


In the third part of this lemma, the centralizer~$\Cent{G}{M}$ is the
kernel of the action of~$G$ upon the irreducible module~$M$.  Hence
$G/\Cent{G}{M}$~is the group of automorphisms of~$M$ induced by~$G$.
Its role in this formula is explained in~\cite[(2.10)]{AschGur}.  Note
also that the condition $\delta_{G/N}(M) < \delta_{G}(M)$ appearing
within part~\ref{i:Lucchini-commquot} implies that when we refine the
series $\trivsubgp \leq N \leq G$ to a chief series for~$G$, not all
the complemented abelian chief factors that are isomorphic to~$M$ as
$G$\nbd modules appear between $N$~and~$G$ and hence at least one
occurs within~$N$.

\section{\boldmath Wreath products~$A \wr A_{n}$ with abelian base
  group}
\label{sec:AwrAn}

In Lucchini~\cite[Theorem~8]{Lucc97}, a formula is given for the
number of generators of a wreath product~$A \wr S_{n}$ of an abelian
group~$A$ by a symmetric group of degree~$n$.  We use this formula to
establish part of Lemma~\ref{lem:keylemma-abelian} in the next
section.  We shall also need to establish an analogous formula for the
number of generators of a wreath product~$A \wr A_{n}$ of an abelian
group~$A$ by an alternating group of degree~$n$.  This purpose of this
section is to adapt the methods of Lucchini~\cite{Lucc97} to
determine~$d(A \wr A_{n})$ as given in
Propositions~\ref{prop:wreath-A4} and~\ref{prop:wreath-An} below.

Fix an integer $n \geq 4$.  If $p$~is a prime number, define $V =
\Field{p}^{n}$ to be the permutation module over the field~$\Field{p}$
associated to the natural action of the alternating group~$A_{n}$ on
$n$~points.  Let $I_{p}$~denote the kernel of the map $V \to
\Field{p}$ given by $(x_{1},x_{2},\dots,x_{n}) \mapsto \sum_{i=1}^{n}
x_{i}$.  For every pair $i$~and~$j$ with $1 \leq i < j \leq n$, let
$e_{ij} = (y_{1},y_{2},\dots,y_{n})$ where $y_{i} =1$, $y_{j} = -1$
and $y_{k} = 0$ for $k \neq i,j$.  Observe that, since $A_{n}$~acts
$2$\nbd transitively on $\Omega = \{1,2,\dots,n\}$, each~$e_{ij}$ is a
generator for~$I_{p}$ as an $\Field{p}A_{n}$\nbd module.

\begin{lemma}
  \label{lem:Ip}
  \begin{enumerate}
  \item \label{i:Ip-p|n}
    If $p$~divides~$n$, then $I_{p}$~is the unique maximal submodule
    of the permutation module~$V$.
  \item \label{i:Ip-coprime}
    If $p$~does not divide~$n$, then $V \cong I_{p} \oplus Z_{p}$
    where $Z_{p}$~is a $1$\nbd dimensional trivial module.
    Furthermore, in this case, $I_{p}$~is an irreducible
    $\Field{p}A_{n}$\nbd module with $\End{I_{p}} \cong \Field{p}$ and
    therefore $r_{A_{n}}(I_{p}) = n-1$.
  \end{enumerate}
\end{lemma}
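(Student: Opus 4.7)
For part~(ii) where $p \nmid n$, the integer~$n$ is invertible in~$\Field{p}$, so the $A_n$\nbd equivariant map $e \colon V \to V$ defined by $e(v) = n^{-1}\phi(v)\mathbf{1}$ (where $\phi$ denotes the augmentation) is an idempotent with image $Z_p = \langle \mathbf{1}\rangle$ and kernel~$I_p$, giving the direct sum decomposition $V = I_p \oplus Z_p$. To establish irreducibility of~$I_p$, I would show that any non-zero submodule~$W$ of~$I_p$ contains some~$e_{ij}$; since $A_n$ acts $2$\nbd transitively on~$\Omega$, the $A_n$\nbd orbit of any single~$e_{ij}$ spans~$I_p$ as an $\Field{p}$\nbd vector space, so this forces $W = I_p$. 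Given $0 \neq w = \sum a_i v_i \in W$, the coefficients are not all equal (else $w$ would be a scalar multiple of~$\mathbf{1}$, but $\mathbf{1} \notin I_p$), and I would locate four distinct indices $i,j,k,l$ with $a_i \neq a_j$ and $a_k = a_l$; then $w - w^{(i,j)(k,l)} = (a_i - a_j)(v_i - v_j)$ is the desired non-zero scalar multiple of~$e_{ij}$.

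To prove $\End{I_p} \cong \Field{p}$, and hence $r_{A_n}(I_p) = n-1$, I would take $\theta \in \End{I_p}$ and analyse $\theta(e_{12})$. Using that $\theta$ commutes with the pointwise stabiliser of~$\{1,2\}$ in~$A_n$ (a copy of~$A_{n-2}$, transitive on~$\{3,\dots,n\}$ when $n \geq 5$), together with the anti-invariance of~$e_{12}$ under $(1,2)(3,4) \in A_n$ and the condition $\theta(e_{12}) \in I_p$, I can show $\theta(e_{12}) = \lambda e_{12}$ for some $\lambda \in \Field{p}$; in the exceptional case $n = 4$, an extra consistency check using the relation $e_{13} + e_{34} = e_{14}$ applied to the output of~$\theta$ rules out any~$e_{34}$\nbd component. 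Since $e_{12}$ generates~$I_p$ as an $\Field{p}A_n$\nbd module, this determines $\theta$ as multiplication by~$\lambda$ throughout~$I_p$.

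For part~(i) where $p \mid n$, I would prove the equivalent statement that every proper submodule of~$V$ is contained in~$I_p$. Given a submodule~$M$ with $M \not\subseteq I_p$, pick $v \in M$ with $\phi(v) \neq 0$; the coefficients of~$v$ cannot all be equal (else $\phi(v) = na_1 = 0$ since $p \mid n$), so the same double-transposition argument as in part~(ii) produces some $e_{ij} \in M$. By $2$\nbd transitivity then $I_p \leq M$, and combined with $v \in M \setminus I_p$ this gives $M = V$.

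The main technical obstacle across both parts is the combinatorial claim that, whenever the coefficients of $v = \sum a_i v_i$ are not all equal and $n \geq 4$, one can find four distinct indices $i,j,k,l$ with $a_i \neq a_j$ and $a_k = a_l$. An elementary case analysis on the multiplicities of the coefficients handles the generic situation; in the delicate case where all $n$~coefficients are pairwise distinct modulo~$p$ (which forces $p \geq n$), one first replaces~$v$ by $v - v^\sigma$ for a suitable $3$\nbd cycle $\sigma \in A_n$, producing a vector with several zero coefficients and hence with plenty of repeats, before applying the double-transposition trick.
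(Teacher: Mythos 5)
Your overall strategy---manufacture some $e_{ij}$ inside a given submodule by subtracting a permuted copy, then invoke $2$\nbd transitivity---is the same as the paper's, and your idempotent decomposition $V=I_p\oplus Z_p$ and your endomorphism computation (including the $n=4$ consistency check via $e_{13}+e_{34}=e_{14}$, which does correctly kill the $e_{34}$\nbd component) are sound. The problem is the combinatorial claim you hang the irreducibility argument on: it is false precisely at $n=4$, which is the case the paper needs for its entire treatment of $A\wr A_{4}$. Take $w=(1,1,-1,-1)\in I_{p}$ with $p$ odd. Its coefficient multiset is of type $\{a,a,b,b\}$ with $a\neq b$, and on four points there is \emph{no} choice of distinct $i,j,k,l$ with $a_{i}\neq a_{j}$ and $a_{k}=a_{l}$: the only equal-valued pairs are $\{1,2\}$ and $\{3,4\}$, and each is the complement of the other. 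This is not the ``all coefficients pairwise distinct'' situation, so your $3$\nbd cycle fallback is never triggered for it; and even in the genuinely all-distinct situation with $n=4$ (which occurs for $p\geq 5$), the vector $v-v^{\sigma}$ for a $3$\nbd cycle $\sigma$ acquires only \emph{one} forced zero coordinate, so it need not have a repeated coefficient and your reduction does not visibly terminate. The gap is repairable --- for instance $(1,1,-1,-1)-(1,1,-1,-1)^{(1\;2\;3)}=2e_{13}$, and similar ad hoc subtractions dispose of the remaining $n=4$ patterns --- but as written the irreducibility proof fails for $n=4$. (For $n\geq5$ your case analysis does go through: whenever some value repeats one can always separate an equal pair from an unequal pair, and the $3$\nbd cycle trick produces $n-3\geq2$ zeros in the all-distinct case. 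Part~(i) also happens to be safe, because the bad configurations force $\phi(v)=0$: the type $\{a,a,b,b\}$ requires $n=4$, $p=2$, whence $\phi(v)=2a+2b=0$, and the all-distinct type with $p\mid n$ forces $p=n$ odd and $\phi(v)=0+1+\dots+(p-1)\equiv 0$.)

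For comparison, the paper avoids the case analysis altogether when $n\geq 5$ by a single explicit chain of subtractions: starting from $x$ with $x_{1}\neq x_{2}$ it forms $y=x-x^{(1\;2)(3\;4)}$, then $z=y-y^{(1\;2\;5)}$, and finally $z^{(1\;3\;5)}-z=e_{35}$; the case $n=4$ is then handled by a separate direct check. If you want to keep your more conceptual ``one equal pair, one unequal pair'' formulation, you should restrict it to $n\geq5$ and add an explicit argument for $n=4$ covering both the $\{a,a,b,b\}$ and the all-distinct coefficient patterns.
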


\begin{proof}
  \ref{i:Ip-p|n}~Since $V/I_{p}$~is isomorphic to the trivial
  module~$\Field{p}$, certainly $I_{p}$~is a maximal submodule of~$V$.
  To prove uniqueness, it is sufficient to show that any element
  outside~$I_{p}$ generates the module~$V$.  Let $x =
  (x_{1},x_{2},\dots,x_{n}) \in V \setminus I_{p}$ and let $U$~be the
  submodule generated by~$x$.  Since $p \mid n$, there exist
  $i$~and~$j$ such that $x_{i} \neq x_{j}$. 
  
  We first consider the case that $n = 4$. Necessarily then $p = 2$
  and we may assume either $x = (1,0,0,0)$ or $x = (0,1,1,1)$ since
  $A_{4}$~acts transitively on~$\Omega$.  Certainly $(1,0,0,0)$
  generates~$V$.  If $x = (0,1,1,1)$, then $x^{(1\;2\;3)} - x =
  e_{12}$, which generates~$I_{p}$, and therefore, by maximality
  of~$I_{p}$, the submodule~$U$ generated by~$x$ equals~$V$.

  For the remainder of the proof, assume then that $n \geq 5$.  By the
  $2$\nbd transitivity of~$A_{n}$, there is no loss of generality in
  assuming that $x_{1} \neq x_{2}$.  Now let
  \[
  y = x - x^{(1\;2)(3\;4)} = (\alpha, -\alpha, \beta, -\beta, 0,
  \dots, 0)
  \]
  for $\alpha = x_{1} - x_{2} \neq 0$ and $\beta = x_{3} -
  x_{4}$. Then $z = y - y^{(1\;2\;5)} =
  (\alpha,-2\alpha,0,0,\alpha,0,\dots,0)$ and, dividing by~$\alpha$,
  we may assume $z = (1,-2,0,0,1,0,\dots,0) \in U$.  Finally
  $z^{(1\;3\;5)} - z = e_{35}$, which generates~$I_{p}$.  We therefore
  conclude that $U = V$, as required.

  \ref{i:Ip-coprime}~Define $Z_{p} = \set{(x, x, \dots , x)}{x \in
    \Field{p}}$.  Then $Z_{p}$~is a one-dimensional submodule of~$V$
  upon which $A_{n}$~acts trivially and $Z_{p} \cap I_{p} = \0$ as
  $p$~does not divide~$n$.  Hence $V = I_{p} \oplus Z_{p}$.  If $x =
  (x_{1},x_{2},\dots,x_{n})$ is a non-zero element of~$I_{p}$, then $x
  \notin Z_{p}$ so $x_{i} \neq x_{j}$ for some $i$~and~$j$.  Then a
  similar approach to that used in part~\ref{i:Ip-p|n} shows that
  $x$~generates~$I_{p}$ and it follows that $I_{p}$~is irreducible.
  
  Fix $a = (1, -1, 1, -1, 0, \dots, 0)$.  This is then a generator
  for~$I_{p}$, so any endomorphism of the module~$I_{p}$ is determined
  by its effect on~$a$.  Let $\phi \in \End{I_{p}}$ and set $b =
  (b_{1},b_{2},\dots,b_{n}) = a\phi$.  We consider first the case when
  $p$~is odd.  Applying~$\phi$ to $a^{(1\;2)(3\;4)} = -a$ shows that
  $b_{2} = -b_{1}$ and $b_{4} = -b_{3}$ and $b_{i} = 0$ for $i \geq 5$,
  while applying~$\phi$ to $a^{(1\;3)(2\;4)} = a$ shows that $b_{1} =
  b_{3}$.  Hence $a\phi = b_{1}a$ and so $\phi$~is multiplication by
  the scalar~$b_{1}$.

  Now suppose $p = 2$ so that $a = (1,1,1,1,0,\dots,0)$ and $n$~is an
  odd number with $n \geq 5$.  Using the same approach, it follows
  that $b_{1} = b_{2} = b_{3} = b_{4}$.  Let $\sigma$~be the
  $(n-4)$\nbd cycle $(5 \; 6 \, \dots \, n)$.  Applying~$\phi$ to the
  equation $a^{\sigma} = a$ shows that $b_{5} = b_{6} = \dots =
  b_{n}$.  Since $b \in I_{p}$, we deduce $(n-4)b_{5} = 0$ and hence
  $b_{5} = b_{6} = \dots = b_{n} = 0$ since $p = 2$ and $n$~is odd.
  Thus $a\phi = b_{1}a$ and, again, $\phi$~is multiplication
  by~$b_{1}$.

  Now we have shown that the endomorphism ring of~$I_{p}$ consists
  merely of the scalars, it follows that $r_{A_{n}}(I_{p}) =
  \dim_{\Field{p}}(I_{p}) = n-1$.
  \end{proof}

The method used to determine the number of generators of a wreath
product $W = A \wr A_{n}$ of an abelian group~$A$ by an alternating
group will involve taking the quotient by the Frattini
subgroup~$\Frat{B}$ of the base group~$B$ of~$W$.  The
quotient~$W/\Frat{B}$ is isomorphic to the wreath product of the
quotient~$A/\Frat{A}$ by~$A_{n}$.  Accordingly, we now describe
properties of the chief factors of such a quotient.

\begin{lemma}
  \label{lem:chief-Ip}
  Let $\bar{A}$~be a direct product of elementary abelian groups and
  let $\bar{W} = \bar{A} \wr A_{n}$ be the wreath product with respect
  to the natural action of the alternating group~$A_{n}$.  Let $X$~be
  a complemented chief factor of~$\bar{W}$ contained in the base
  group~$\bar{B}$ such that $A_{n}$~acts non-trivially on~$X$.  Then
  $X$~is isomorphic to~$I_{p}$ for some prime divisor~$p$ of the
  order of~$\bar{A}$ with $p \nmid n$ and $\delta_{\bar{W}}(X) =
  \delta_{\bar{W}}(I_{p}) = d_{p}(\bar{A})$.
\end{lemma}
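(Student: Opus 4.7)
The plan is to reduce the question to a module-theoretic problem via the primary decomposition of~$\bar{A}$, and then to characterise complemented chief factors through $\Field{p}A_{n}$-direct summand decompositions. I would begin by writing $\bar{A} = \bigoplus_{p} \bar{A}_{p}$ with each $\bar{A}_{p}$~elementary abelian of rank $d_{p}(\bar{A})$, so that $\bar{B} = \bar{A}^{n} = \bigoplus_{p} \bar{A}_{p}^{n}$, and observe that $\bar{A}_{p}^{n} \cong V^{d_{p}(\bar{A})}$ as $\Field{p}A_{n}$-modules, where $V = \Field{p}^{n}$ is the permutation module of Lemma~\ref{lem:Ip}. Any chief factor of~$\bar{W}$ contained in~$\bar{B}$ with non-trivial $A_{n}$-action is then a non-trivial simple $\Field{p}A_{n}$-composition factor of $V^{d_{p}(\bar{A})}$ for some prime~$p$ dividing $\order{\bar{A}}$; by Lemma~\ref{lem:Ip} there are only two isomorphism types available, namely $I_{p}$ when $p \nmid n$, or the simple heart $I_{p}/Z_{p}$ when $p \mid n$ (where $Z_{p}$~denotes the $1$-dimensional submodule of constant vectors).

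Next I would establish the following criterion: a chief factor $X = H/K$ in~$\bar{B}$ is complemented in~$\bar{W}$ if and only if $H/K$~is an $\Field{p}A_{n}$-module direct summand of $\bar{B}/K$. Since $\bar{W}/K = (\bar{B}/K) \rtimes A_{n}$, any module complement $L/K$ lifts to the group complement $(L/K) \rtimes A_{n}$. Conversely, given a group complement $C/K$, the subgroup $(C \cap \bar{B})/K$ is normal in~$\bar{W}$ (as $\bar{B}$~is abelian and $C$~normalises $C \cap \bar{B}$ because $\bar{B} \trianglelefteq \bar{W}$), and the Dedekind modular law applied to $H/K \leq \bar{B}/K \leq \bar{W}/K$ shows it is a module complement. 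Lafuente's theorem (cited in Section~\ref{sec:prelims}) then permits $\delta_{\bar{W}}(X)$ to be computed in any convenient chief series.

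The main obstacle will be ruling out the case $p \mid n$. A direct computation with the $3$\nbd cycles $(1\;2\;i)$ for $3 \leq i \leq n$, each of which fixes at least one coordinate since $n \geq 4$, shows that $(V/Z_{p})^{A_{n}} = \0$; consequently $V/Z_{p}$~has no trivial submodule, the extension $0 \to I_{p}/Z_{p} \to V/Z_{p} \to \Field{p} \to 0$ is non-split, and $V/Z_{p}$~is indecomposable with unique proper non-zero submodule $I_{p}/Z_{p}$. To count $\delta_{\bar{W}}(I_{p}/Z_{p})$ I would work in a chief series refining the natural filtration of $\bar{B}_{p} \cong V^{d_{p}(\bar{A})}$ that absorbs every $Z_{p}$-socle first: at the stage where the next chief factor is of type $I_{p}/Z_{p}$, the $p$-primary part of $\bar{B}/K$ is isomorphic to $\Field{p}^{j} \oplus (V/Z_{p})^{m}$ for suitable $j \geq 0$ and $m \geq 1$. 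As $I_{p}/Z_{p}$~is a $p$-group, being a direct summand of $\bar{B}/K$ is equivalent to being one of this $p$-primary part. The Krull--Schmidt theorem identifies the indecomposable summands of $\Field{p}^{j} \oplus (V/Z_{p})^{m}$ as $j$~copies of $\Field{p}$ (dimension~$1$) and $m$~copies of $V/Z_{p}$ (dimension $n-1$); since $I_{p}/Z_{p}$~is indecomposable of dimension $n-2$, it appears on neither list, so no chief factor of type $I_{p}/Z_{p}$ is complemented. Hence $\delta_{\bar{W}}(I_{p}/Z_{p}) = 0$, forcing $X \cong I_{p}$ with $p \nmid n$.

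Finally, for $p \nmid n$, Lemma~\ref{lem:Ip}\ref{i:Ip-coprime} gives $V = I_{p} \oplus Z_{p}$, so $\bar{B}_{p} \cong V^{d_{p}(\bar{A})}$ is a semisimple $\Field{p}A_{n}$-module with exactly $d_{p}(\bar{A})$ composition factors isomorphic to~$I_{p}$. A chief series that lists these summands one at a time makes each chief factor a direct summand of the corresponding quotient $\bar{B}/K$, and the criterion renders each complemented; this gives $\delta_{\bar{W}}(I_{p}) = d_{p}(\bar{A})$ and completes the proof.
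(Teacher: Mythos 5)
Your reduction of ``complemented in $\bar{W}$'' to ``$\Field{p}A_{n}$\nbd module direct summand of $\bar{B}/K$'' is exactly the computation the paper performs, and your treatment of the case $p \nmid n$ matches its conclusion. Where you genuinely diverge is in excluding the case $p \mid n$: you analyse the uniserial module $V/Z_{p}$ and rule out its heart $I_{p}/Z_{p}$ by a Krull--Schmidt comparison of dimensions against quotients of the form $\Field{p}^{j}\oplus (V/Z_{p})^{m}$. The paper instead shows that no complemented chief factor of $\bar{W}$ inside $\bar{B}$ can lie below $\rad{\bar{B}}$ (a complement $C$ would make $(\bar{B}\cap C)/K$ a maximal submodule of $\bar{B}/K$ containing $\rad{\bar{B}}/K$ and hence $H/K$, a contradiction), and then reads everything off from the semisimple quotient $\bar{B}/\rad{\bar{B}}$; since $\rad{V} = I_{p}$ when $p \mid n$, all non-trivial factors for such $p$ are discarded at a stroke. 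The radical argument is shorter and sidesteps two things your route needs but does not fully supply: first, the facts that $I_{p}/Z_{p}$ is simple and $V/Z_{p}$ indecomposable are not part of the statement of Lemma~\ref{lem:Ip} (part~(i) only identifies the unique maximal submodule of~$V$), so you would have to extract them from the generation argument inside its proof; second, your claim that the relevant quotients have the form $\Field{p}^{j}\oplus(V/Z_{p})^{m}$ holds for the coordinate-wise refinement of $\0 < Z_{p}^{d_{p}(\bar{A})} < I_{p}^{d_{p}(\bar{A})} < \bar{B}_{p}$ but not for an arbitrary (e.g.\ diagonal) choice of minimal submodules, so you should exhibit that series explicitly and then invoke Lafuente to say one series suffices.

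There is also one genuine omission. The quantity $\delta_{\bar{W}}(I_{p})$ counts complemented chief factors isomorphic to~$I_{p}$ in a full chief series of~$\bar{W}$, not merely those inside~$\bar{B}$. Your argument never looks above~$\bar{B}$, so as it stands you have only established $\delta_{\bar{W}}(I_{p}) \geq d_{p}(\bar{A})$. To get equality you must check that no chief factor of $\bar{W}/\bar{B} \cong A_{n}$ is isomorphic to~$I_{p}$ as a $\bar{W}$\nbd module: for $n \geq 5$ those factors are non-abelian, and for $n = 4$ the only non-trivial abelian chief factor is $C_{2}\times C_{2}$, which cannot be~$I_{p}$ because $p \nmid 4$ forces $p$ to be odd. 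The paper devotes its final paragraph to exactly this point, and your proof needs the corresponding sentence.
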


\begin{proof}
  We shall view the base group~$\bar{B}$ of~$\bar{W}$ as a module
  for~$A_{n}$.  Its radical~$\rad{\bar{B}}$ is, by definition, the
  intersection of the maximal submodules of~$\bar{B}$ and consequently
  corresponds to a normal subgroup of~$\bar{W}$.  We shall refine the
  series
  \begin{equation}
    \trivsubgp \leq \rad{\bar{B}} < \bar{B} < \bar{W}
    \label{eq:W-normalseries}
  \end{equation}
  to a chief series for~$\bar{W}$.  Hence if $X = H/K$ is a chief
  factor of~$\bar{W}$ with $H \leq \bar{B}$, then we may assume that
  either $\rad{\bar{B}} \leq K < H \leq \bar{B}$ or $K < H \leq
  \rad{\bar{B}}$.

  Consider first a chief factor $X = H/K$ with $K < H \leq
  \rad{\bar{B}}$ and suppose that $X$~is complemented.  Then there is
  a subgroup~$C$ of~$\bar{W}$ such that $HC = \bar{W}$ and $C \cap H =
  K$.  Then $\bar{B} = \bar{B} \cap HC = (\bar{B} \cap C)H$ and
  $\bar{B} \cap C \cap H = K$, so $\bar{B}/K = (\bar{B} \cap C)/K
  \oplus H/K$ as modules.  Since $H/K$~is a minimal submodule
  of~$\bar{B}/K$, necessarily $(\bar{B} \cap C)/K$~is a maximal
  submodule of~$\bar{B}/K$.  Hence $H \leq \rad{\bar{B}} \leq \bar{B}
  \cap C$, which implies $K = \bar{B} \cap C \cap H = H$ and this is
  impossible.
  
  Hence if $X = H/K$ is a complemented chief factor of~$\bar{W}$
  contained in~$\bar{B}$, then we may assume $\rad{\bar{B}} \leq K < H
  \leq \bar{B}$.  As a module, $\bar{B}$~is the direct sum of $p$\nbd
  primary submodules~$\bar{B}_{p}$ for each prime~$p$ dividing the
  order of~$\bar{A}$ and hence
  \[
  \rad{\bar{B}} = \bigoplus_{p \mid \order{\bar{A}}} \rad{\bar{B}_{p}}.
  \]
  Furthermore, by construction $\bar{B}_{p}$~is a direct sum of
  $d_{p}(\bar{A})$~copies of the permutation module $V =
  \Field{p}^{n}$ associated to the natural action of~$A_{n}$ and
  hence, by Lemma~\ref{lem:Ip}, $\rad{\bar{B}_{p}} = \0$ when $p$~does
  not divide~$n$ and $\rad{\bar{B}_{p}} = I_{p}^{d_{p}(\bar{A})}$ when
  $p$~divides~$n$.  Therefore
  \begin{equation}
    \bar{B}/\rad{\bar{B}} \cong
    \biggl( \bigoplus_{p \mid \order{\bar{A}}} Z_{p}^{d_{p}(\bar{A})}
    \biggr) \oplus
    \biggl( \bigoplus_{\substack{p \mid \order{\bar{A}},\\p \nmid n}}
    I_{p}^{d_{p}(\bar{A})} \biggr).
    \label{eq:B/radB}
  \end{equation}
  Here each~$Z_{p}$ is trivial as a $\Field{p}A_{n}$\nbd module.
  Hence, if $X$~is a complemented chief factor of~$\bar{W}$ contained
  in~$\bar{B}$ that is not trivial as a module, then $X \cong I_{p}$
  for some prime divisor~$p$ of the order of~$\bar{A}$ with $p \nmid
  n$.  Furthermore, $I_{p}$~occurs precisely $d_{p}(\bar{A})$~many
  times as a composition factor of the
  module~$\bar{B}/{\rad{\bar{B}}}$ and, by use of the direct sum
  decomposition in Equation~\eqref{eq:B/radB}, one can observe that
  each factor is complemented as a chief factor of~$\bar{W}$.

  If $X = H/K$~is a factor of the chief series obtained by
  refining~\eqref{eq:W-normalseries} with $H \nleq \bar{B}$, then
  $X$~corresponds to a chief factor of $A_{n} \cong \bar{W}/\bar{B}$.
  If $n \geq 5$, then $X \cong A_{n}$ and this is non-abelian.  If $n
  = 4$ the only chief factor of~$A_{4}$ upon which~$A_{4}$ acts
  non-trivially is isomorphic to~$C_{2} \times C_{2}$ and hence not
  isomorphic to~$I_{p}$ as $p$~is odd in this case.  We conclude that
  any chief factor isomorphic to~$I_{p}$ is contained in~$\bar{B}$ and
  therefore $I_{p}$~occurs precisely $d_{p}(\bar{A})$~times as a
  complemented chief factor of~$\bar{W}$, namely corresponding to the
  composition factors of the module~$\bar{B}/{\rad{\bar{B}}}$.
\end{proof}

We continue the discussion of complemented chief factors in the wreath
product $W = A \wr A_{n}$ that we began in Lemma~\ref{lem:chief-Ip}.
We concentrate first on the case $n = 4$.

\begin{lemma}
  \label{lem:h4-big-p}
  Let $\bar{A}$~be a direct product of elementary abelian groups, let
  $\bar{W} = \bar{A} \wr A_{4}$ be the wreath product of~$\bar{A}$ by
  the alternating group of degree~$4$ with respect to its natural
  action and let $p$~be a prime with $p \geq 5$.  Then
  \begin{enumerate}
  \item \label{i:H^1-4}
    $\order{\Cohom{A_{4},I_{p}}} \leq p$;
  \item \label{i:s4}
    $s_{\bar{W}}(I_{p}) \leq d_{p}(\bar{A}) + 1$;
  \item \label{i:h4}
    $h_{\bar{W}}(I_{p}) \leq \max(2,d_{p}(\bar{A}))$.
  \end{enumerate}
\end{lemma}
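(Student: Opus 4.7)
The plan is to prove the three parts in sequence, exploiting the fact that $p \geq 5$ is coprime to $\order{A_{4}} = 12$.

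For part~\ref{i:H^1-4}, since $p$ does not divide~$\order{A_{4}}$, the group algebra~$\Field{p}A_{4}$ is semisimple by Maschke's theorem, so all positive-degree cohomology vanishes and in particular $\Cohom{A_{4},I_{p}} = 0$. Thus $\order{\Cohom{A_{4},I_{p}}} = 1 \leq p$; the stated bound is far from tight but is what is needed below.

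For part~\ref{i:s4}, the natural tool is Lemma~\ref{lem:Lucchini-facts}\ref{i:Lucchini-s}, which gives
\[
s_{\bar{W}}(I_{p}) = \delta_{\bar{W}}(I_{p}) + \dim_{\End{I_{p}}} \Cohom{\bar{W}/\Cent{\bar{W}}{I_{p}}, I_{p}}.
\]
Lemma~\ref{lem:chief-Ip} supplies the first summand: $\delta_{\bar{W}}(I_{p}) = d_{p}(\bar{A})$. For the second, I would identify $\Cent{\bar{W}}{I_{p}} = \bar{B}$ by observing that the base group~$\bar{B}$ is abelian and so acts trivially on the chief factor~$I_{p}$, while the induced action of $\bar{W}/\bar{B} \cong A_{4}$ on~$I_{p}$ is faithful; indeed, any $\sigma \in A_{4}$ fixing both $e_{12}$ and~$e_{13}$ must fix $1$,~$2$ and~$3$, hence be trivial. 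Combining this with part~\ref{i:H^1-4} and the fact $\End{I_{p}} \cong \Field{p}$ (from Lemma~\ref{lem:Ip}\ref{i:Ip-coprime}, applicable as $p \nmid 4$) then yields $s_{\bar{W}}(I_{p}) \leq d_{p}(\bar{A}) + 1$.

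For part~\ref{i:h4}, I would substitute into the definition~\eqref{eq:h_G} of $h_{\bar{W}}(I_{p})$, using $r_{\bar{W}}(I_{p}) = n - 1 = 3$ from Lemma~\ref{lem:Ip}\ref{i:Ip-coprime}, to obtain
\[
h_{\bar{W}}(I_{p}) \leq \left\lfloor \frac{d_{p}(\bar{A})}{3} \right\rfloor + 2.
\]
A brief case split---handling $d_{p}(\bar{A}) \leq 2$ (where the right-hand side equals~$2$) and $d_{p}(\bar{A}) \geq 3$ (where $\lfloor d_{p}(\bar{A})/3 \rfloor + 2 \leq d_{p}(\bar{A})$) separately---then bounds this by $\max(2, d_{p}(\bar{A}))$. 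I anticipate no serious obstacle: the argument is largely bookkeeping once Lemma~\ref{lem:Lucchini-facts}\ref{i:Lucchini-s} is invoked, with the only point of care being the identification of~$\Cent{\bar{W}}{I_{p}}$ via the faithfulness check above.
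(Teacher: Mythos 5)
Your proposal is correct and follows the same overall architecture as the paper for parts (ii) and (iii): identify $\Cent{\bar{W}}{I_{p}}$ with the base group, quote $\delta_{\bar{W}}(I_{p}) = d_{p}(\bar{A})$ from Lemma~\ref{lem:chief-Ip} and $\End{I_{p}} \cong \Field{p}$, $r = 3$ from Lemma~\ref{lem:Ip}, feed everything into Lemma~\ref{lem:Lucchini-facts}\ref{i:Lucchini-s} and the formula~\eqref{eq:h_G}, and finish with the same floor-function case split. The genuine difference is in part~\ref{i:H^1-4}. The paper bounds $\order{\Cohom{A_{4},I_{p}}}$ by explicitly counting derivations: it shows $\order{\Inn{A_{4},I_{p}}} = p^{3}$ since $I_{p}$ has no nonzero $A_{4}$\nbd fixed points, and then uses the relations $(1\;2\;3)^{3} = 1$ and $\bigl((1\;2)(3\;4)\bigr)^{2} = 1$ to cap $\order{\Der{A_{4},I_{p}}}$ at $p^{4}$, giving $\order{\Cohom{A_{4},I_{p}}} \leq p$. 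You instead invoke the coprimality of $p$ and $\order{A_{4}} = 12$ to get the stronger conclusion $\Cohom{A_{4},I_{p}} = 0$; this is valid (the cleanest justification is that $\Cohom[n]{G,M}$ for $n \geq 1$ is annihilated both by $\order{G}$ and by the exponent of~$M$, rather than an appeal to semisimplicity of $\Field{p}A_{4}$, but the conclusion is the same) and it even sharpens part~\ref{i:s4} to an equality $s_{\bar{W}}(I_{p}) = d_{p}(\bar{A})$. What the paper's more laborious derivation count buys is uniformity: the identical computation, with the single constraint $3x_{4}=0$ becoming vacuous, handles the prime $p = 3$ in Lemma~\ref{lem:h4-p=3}, where your coprimality argument is unavailable since $3 \mid \order{A_{4}}$. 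So your route is cleaner for $p \geq 5$ but does not extend to the companion lemma, whereas the paper's does.
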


\begin{proof}
  \ref{i:H^1-4}~An element of the permutation module~$V =
  \Field{p}^{4}$ is fixed by all elements of~$A_{4}$ if and only if it
  has the form $v = (x,x,x,x)$ for some $x \in \Field{p}$.  Since
  $p$~does not divide $n = 4$, such an element lies in~$I_{p}$ only
  when $x = 0$.  Hence $\Cent{I_{p}}{A_{4}} = \set{ a \in I_{p} }{
    \text{$a^{g} = a$ for all $g \in A_{4}$} }$ contains only zero and
  we conclude that $\order{\Inn{A_{4},I_{p}}} = \order{I_{p}} = p^{3}$.

  If $\delta \colon A_{4} \to I_{p}$ is any derivation, then it is
  determined by its values on the permutations $(1\;2\;3)$ and
  $(1\;2)(3\;4)$ since these generate~$A_{4}$.  Let
  $(x_{1},x_{2},x_{3},x_{4}) = (1\;2\;3)^{\delta}$.  Applying~$\delta$
  to the formula $(1\;2\;3)^{3} = 1$ yields
  \[
  (x_{1},x_{2},x_{3},x_{4}) + (x_{1},x_{2},x_{3},x_{4})^{(1\;2\;3)} +
  (x_{1},x_{2},x_{3},x_{4})^{(1\;3\;2)} = 0
  \]
  and hence $x_{1}+x_{2}+x_{3} = 0$ and $3x_{4} = 0$.  Since $p \neq
  3$, we conclude there at most $p^{2}$~choices
  for~$(1\;2\;3)^{\delta}$.  Similarly, there are at most
  $p^{2}$~choices for $(y_{1},y_{2},y_{3},y_{4}) = \bigl( (1\;2)(3\;4)
  \bigr)^{\delta}$ since the same method shows that $y_{1}+y_{2} =
  y_{3}+y_{4} = 0$.  Hence $\order{\Der{A_{4},I_{p}}} \leq p^{4}$.  It
  now follows that $\order{\Cohom{A_{4},I_{p}}} =
  \order{\Der{A_{4},I_{p}}}/\order{\Inn{A_{4},I_{p}}} \leq p$.

  \ref{i:s4}~As observed in Lemma~\ref{lem:chief-Ip},
  $\delta_{\bar{W}}(I_{p}) = d_{p}(\bar{A})$.  Since $A_{4}$~acts
  faithfully on~$I_{p}$, the centralizer of~$I_{p}$ in~$\bar{W}$
  equals the base group.  Note that $\End{I_{p}} = \Field{p}$ by
  Lemma~\ref{lem:Ip}\ref{i:Ip-coprime} and hence, by
  Lemma~\ref{lem:Lucchini-facts}\ref{i:Lucchini-s},
  \[
  s_{\bar{W}}(I_{p}) = d_{p}(\bar{A}) + \dim_{\Field{p}}\Cohom{A_{4},I_{p}}
  \]
  and therefore the claimed bound for~$s_{\bar{W}}(I_{p})$ follows
  from part~\ref{i:H^1-4}.

  \ref{i:h4}~Note that the action of~$\bar{W}$ on~$I_{p}$ is induced
  by the action of the quotient~$A_{4}$ and consequently the structure
  of~$I_{p}$ is unchanged whether we view it as an $A_{4}$\nbd module
  or a $\bar{W}$\nbd module.  Hence, by use of
  Lemma~\ref{lem:Ip}\ref{i:Ip-coprime}, $r_{\bar{W}}(I_{p}) =
  r_{A_{4}}(I_{p}) = 3$ and substituting this and our bound in the
  previous part into the Formula~\eqref{eq:h_G}
  for~$h_{\bar{W}}(I_{p})$ yields
  \[
  h_{\bar{W}}(I_{p}) = \left\lfloor
  \frac{s_{\bar{W}}(I_{p})-1}{r_{\bar{W}}(I_{p})} \right\rfloor + 2 \leq
  \left\lfloor{\textstyle\frac{1}{3}}d_{p}(\bar{A})\right\rfloor + 2
  \leq \max(2,d_{p}(\bar{A})),
  \]
  as required.
\end{proof}

The argument when $p = 3$ requires a little adjustment.  The
constraint $3x_{4} = 0$ on the image of $(1\;2\;3)$ under a derivation
$\delta \colon A_{4} \to I_{3}$ yields no information in this case.
One may therefore only conclude that $\order{\Cohom{A_{4},I_{3}}} \leq
3^{2}$.  Following this adjustment, the result is as follows:

\begin{lemma}
  \label{lem:h4-p=3}
  Let $\bar{A}$~be a direct product of elementary abelian groups and
  $\bar{W} = \bar{A} \wr A_{4}$ be the wreath product of~$\bar{A}$ by
  the alternating group of degree~$4$ with respect to its natural
  action.  Then
  \[
  h_{\bar{W}}(I_{3}) \leq \max(2, d_{3}(\bar{A})+1).
  \]
\end{lemma}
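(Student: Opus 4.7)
The plan is to mimic the proof of Lemma~\ref{lem:h4-big-p} with exactly the substitution flagged in the paragraph immediately before the statement: the constraint $3x_{4}=0$ on $(1\;2\;3)^{\delta}$ is vacuous when $p=3$, so the count of derivations grows by a factor of~$3$ and the cohomology bound weakens from $p$ to $p^{2}$. The rest of the argument then goes through \emph{mutatis mutandis}, and the only slightly fiddly point is the arithmetic at the end.

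First I would record $r_{\bar{W}}(I_{3})=3$. Since $3\nmid 4$, Lemma~\ref{lem:Ip}\ref{i:Ip-coprime} applies with $p=3$, giving $\End{I_{3}}\cong\Field{3}$ and $r_{A_{4}}(I_{3})=n-1=3$. The action of~$\bar{W}$ on~$I_{3}$ factors through the quotient~$A_{4}$, so the endomorphism ring and dimension over~$\End{I_{3}}$ are unchanged and $r_{\bar{W}}(I_{3})=3$.

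Next I would bound $s_{\bar{W}}(I_{3})$. By Lemma~\ref{lem:chief-Ip}, $\delta_{\bar{W}}(I_{3})=d_{3}(\bar{A})$, and since $A_{4}$ acts faithfully on~$I_{3}$ the centralizer $\Cent{\bar{W}}{I_{3}}$ is precisely the base group. Lemma~\ref{lem:Lucchini-facts}\ref{i:Lucchini-s} therefore yields
\[
s_{\bar{W}}(I_{3}) = d_{3}(\bar{A}) + \dim_{\Field{3}}\Cohom{A_{4},I_{3}}.
\]
Invoking the observation in the paragraph preceding the lemma that $\order{\Cohom{A_{4},I_{3}}}\leq 3^{2}$, I conclude $s_{\bar{W}}(I_{3})\leq d_{3}(\bar{A})+2$.

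Finally I would substitute into Formula~\eqref{eq:h_G}:
\[
h_{\bar{W}}(I_{3}) = \left\lfloor \frac{s_{\bar{W}}(I_{3})-1}{r_{\bar{W}}(I_{3})} \right\rfloor + 2
\leq \left\lfloor \frac{d_{3}(\bar{A})+1}{3} \right\rfloor + 2,
\]
and finish with a short case check that this is at most $\max(2,d_{3}(\bar{A})+1)$. For $d_{3}(\bar{A})\leq 1$ the right-hand side equals~$2$; for $d_{3}(\bar{A})\geq 2$ one has $\lfloor (d_{3}(\bar{A})+1)/3\rfloor \leq d_{3}(\bar{A})-1$, so the bound is at most $d_{3}(\bar{A})+1$. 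The only real obstacle is this final arithmetic verification, and it is entirely routine; the substantive change from Lemma~\ref{lem:h4-big-p} is simply the weakened cohomology bound.
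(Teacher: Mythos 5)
Your proposal is correct and follows exactly the route the paper intends: the paper gives no separate proof of this lemma, merely noting in the preceding paragraph that the bound $\order{\Cohom{A_{4},I_{3}}}\leq 3^{2}$ replaces the bound $\leq p$ from Lemma~\ref{lem:h4-big-p}, so that $s_{\bar{W}}(I_{3})\leq d_{3}(\bar{A})+2$ and the conclusion follows from Formula~\eqref{eq:h_G}. Your closing arithmetic check (splitting at $d_{3}(\bar{A})\leq 1$ versus $d_{3}(\bar{A})\geq 2$) is the only step the paper leaves implicit, and you carry it out correctly.
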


We may now put the above information together to determine the minimum
number of generators of a wreath product $A \wr A_{4}$ where $A$~is a
finite abelian group.

\begin{prop}
  \label{prop:wreath-A4}
  Let $A$~be a finite abelian group and define $W = A \wr A_{4}$ to be
  the wreath product of~$A$ by~$A_{4}$ with respect to the natural
  action of~$A_{4}$ on $4$~points.  Then
  \[
  d(W) = \max (2, d(A), d_{3}(A)+1 ).
  \]
\end{prop}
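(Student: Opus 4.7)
The plan is to handle the two inequalities in the claimed formula separately. For the lower bound, I examine the abelianization of~$W$; for the upper bound, I reduce to the case that~$A$ is elementary abelian in each Sylow and then apply Lemma~\ref{lem:Lucchini-facts}\ref{i:Lucchini-commquot}, invoking the chief-factor analysis of Lemma~\ref{lem:chief-Ip} together with the cohomological estimates in Lemmas~\ref{lem:h4-big-p} and~\ref{lem:h4-p=3}.

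For the lower bound, note that $W/W' \cong A \times C_{3}$: the base $B = A^{4}$ is abelian and $A_{4}$~permutes its four direct factors transitively, so the coinvariants $B/[B,A_{4}]$ collapse to a single copy of~$A$, while $A_{4}/A_{4}' \cong C_{3}$. Since $W$~is non-abelian, this forces
\[
d(W) \geq \max(2, d(W/W')) = \max(2, d(A), d_{3}(A)+1).
\]

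For the upper bound, I first reduce to the case that $A$~is a direct product of elementary abelian groups. Using the general fact that $\Phi(N) \leq \Phi(G)$ for any normal subgroup~$N$ of a finite group~$G$ (a short application of the modular law and the non-generator property of~$\Phi(N)$), together with $\Phi(B) = \Phi(A)^{4}$, replacing~$A$ by $A/\Phi(A)$ leaves $d(W)$,~$d(A)$ and every~$d_{p}(A)$ unchanged. So I assume henceforth that $A$~has this form; then $W$~is soluble, and Lemmas~\ref{lem:Roggenkamp} and~\ref{lem:Gruenberg}\ref{i:soluble-pr} give $d(W) = d(I_{W})$. Applying Lemma~\ref{lem:Lucchini-facts}\ref{i:Lucchini-commquot} with $N = W'$, the quotient $W/W' \cong A \times C_{3}$ is abelian and soluble, so $d(I_{W/W'}) = \max(d(A), d_{3}(A)+1)$; and because $W/W'$~is abelian, every non-trivial irreducible $W$-module arising as a complemented chief factor of~$W$ contributes to the maximum in the formula.

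By Lemma~\ref{lem:chief-Ip}, the complemented chief factors inside the base group that are non-trivial as modules are precisely the~$I_{p}$ for odd primes $p \mid \order{A}$; Lemma~\ref{lem:h4-big-p} gives $h_{W}(I_{p}) \leq \max(2, d_{p}(A))$ when $p \geq 5$ and Lemma~\ref{lem:h4-p=3} gives $h_{W}(I_{3}) \leq \max(2, d_{3}(A)+1)$. The only further complemented chief factor of~$W$ that is non-trivial as a module is the unique copy isomorphic to~$V_{4}$ coming from $V_{4} \trianglelefteq A_{4}$ in the top. On this module $W$~acts through $A_{4}/V_{4} \cong C_{3}$; since $C_{3}$~permutes the three nonzero vectors of~$V_{4}$ transitively, $\End{V_{4}} \cong \Field{4}$ and so $r_{W}(V_{4}) = 1$, while $\gcd(\order{C_{3}}, \order{V_{4}}) = 1$ forces $\Cohom{C_{3},V_{4}} = 0$, so Lemma~\ref{lem:Lucchini-facts}\ref{i:Lucchini-s} yields $s_{W}(V_{4}) = 1$ and hence $h_{W}(V_{4}) = 2$. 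Assembling these contributions will deliver $d(W) \leq \max(2, d(A), d_{3}(A)+1)$, matching the lower bound. The main obstacle here is the chief-factor bookkeeping: I must verify that the listed candidates exhaust all complemented chief factors of~$W$, and in particular that the $V_{4}$-type composition factors sitting inside the base are not themselves complemented in~$W$---this is precisely the content of Lemma~\ref{lem:chief-Ip} at the excluded case $n=4$,~$p=2$.
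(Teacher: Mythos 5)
Your argument is correct and rests on the same machinery as the paper's proof --- the Frattini reduction to $\bar{A} = A/\Frat{A}$, solubility giving $\pr{\bar{W}} = 0$ and hence $d(\bar{W}) = d(I_{\bar{W}})$, and then Lemma~\ref{lem:Lucchini-facts}\ref{i:Lucchini-commquot} combined with Lemmas~\ref{lem:chief-Ip}, \ref{lem:h4-big-p} and~\ref{lem:h4-p=3} --- but you make a genuinely different choice of the normal subgroup~$N$ in Lucchini's inequality, and this changes the bookkeeping in an interesting way. The paper takes $N = [\bar{B},A_{4}]$, so that $\bar{W}/N \cong \bar{A} \times A_{4}$ and the only modules~$M$ with $\delta_{\bar{W}/N}(M) < \delta_{\bar{W}}(M)$ are the $I_{p}$ for odd primes $p \mid \order{A}$; the price is that one must then bound $d(I_{\bar{A} \times A_{4}}) = d(\bar{A} \times A_{4})$, which the paper does by exhibiting an explicit generating set (pairing a generator of~$\bar{A}$ of order coprime to~$3$ with $(1\;2\;3)$, another with $(1\;2)(3\;4)$, and recovering $A_{4}$ from a power and a commutator). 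You instead take $N = \bar{W}'$, so that $\bar{W}/N \cong \bar{A} \times C_{3}$ is abelian and $d(I_{\bar{W}/N})$ is immediate, but now the Klein chief factor $V_{4} = A_{4}'$ enters the list of relevant modules and you must estimate~$h_{\bar{W}}(V_{4})$. Your computation of that term is right: $\End{V_{4}} \cong \Field{4}$ gives $r_{\bar{W}}(V_{4}) = 1$, coprimality of $\order{C_{3}}$ and $\order{V_{4}}$ kills the cohomology, and --- the one point that genuinely needs care, which you correctly flag --- the $V_{4}$\nbd type composition factors inside the base group lie in $\rad{\bar{B}_{2}} = I_{2}^{d_{2}(\bar{A})}$ and so are not complemented, exactly as the first paragraph of the proof of Lemma~\ref{lem:chief-Ip} shows; hence $\delta_{\bar{W}}(V_{4}) = 1$, so $s_{\bar{W}}(V_{4}) = 1$ and $h_{\bar{W}}(V_{4}) = 2$. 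The lower bounds are the same computation in both versions. In short, each route trades one piece of work for the other: yours replaces the explicit generating set for $\bar{A} \times A_{4}$ by one extra, easy, cohomological estimate, and is arguably slightly cleaner for it.
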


\begin{proof}
  Define $d = \max(2, d(A), d_{3}(A)+1)$.  Since there is a surjective
  homomorphism from~$W$ onto~$A \times A_{4}$ and the latter has a
  homomorphism onto~$A \times C_{3}$, it follows that $d(W) \geq d$.
  
  Write $B$~for the base group of the wreath product~$W$ and $\bar{A}
  = A/\Frat{A}$ for the quotient of the abelian group~$A$ by its
  Frattini subgroup~$\Frat{A}$.  Since $B$~is a normal subgroup
  of~$W$, the Frattini subgroup of~$B$ is contained in that of~$W$.
  Therefore
  \[
  d(W) = d(W/\Frat{B}) = d(\bar{W})
  \]
  where $\bar{W} = W/\Frat{B} \cong \bar{A} \wr A_{4}$.  Let us
  write~$\bar{B}$ for the base group of~$\bar{W}$; that is, the image
  of~$B$ under the natural map $W \to \bar{W}$.  Since $\bar{W}$~is
  certainly soluble, Lemma~\ref{lem:Gruenberg}\ref{i:soluble-pr} tells
  us that it has zero presentation rank and hence
  \[
  d(\bar{W}) = d(I_{\bar{W}})
  \]
  by Lemma~\ref{lem:Roggenkamp}.  We now apply
  Lemma~\ref{lem:Lucchini-facts}\ref{i:Lucchini-commquot} with $N =
  [\bar{B},A_{4}]$, noting that then $\bar{W}/N \cong \bar{A} \times
  A_{4}$, to deduce that
  \[
  d(I_{\bar{W}}) \leq \max_{M} (2, d(I_{\bar{A} \times A_{4}}),
  h_{\bar{W}}(M) )
  \]
  where the maximum is taken over all non-trivial irreducible
  $\bar{W}$\nbd modules with $\delta_{\bar{W}/N}(M) <
  \delta_{\bar{W}}(M)$.  If $M$~is a $\bar{W}$\nbd module with
  $\delta_{\bar{W}/N}(M) < \delta_{\bar{W}}(M)$, then $M$~occurs as
  some complemented chief factor contained in~$N$ and hence
  in~$\bar{B}$.  Lemma~\ref{lem:chief-Ip} tells us that such an~$M$ is
  isomorphic to~$I_{p}$ for some odd prime divisor~$p$ of the
  order~$A$.  In addition, a further application of
  Lemma~\ref{lem:Roggenkamp} tells us that $d(I_{\bar{A} \times
    A_{4}}) = d(\bar{A} \times A_{4}) \geq 2$.  Putting this together,
  we deduce that
  \begin{equation}
    d(W) = d(I_{\bar{W}}) \leq \max_{p} ( d(\bar{A} \times A_{4}),
    h_{\bar{W}}(I_{p}) )
    \label{eq:d(W)-bound-A4}
  \end{equation}
  where the maximum is taken over all odd prime divisors
  of~$\order{A}$.  By Lemmas~\ref{lem:h4-big-p} and~\ref{lem:h4-p=3},
  $h_{\bar{W}}(I_{p}) \leq d$.

  Since $d \geq d(A)$, we may find a generating set for~$\bar{A}$
  consisting of $d$~elements, say $\bar{A} = \langle
  x_{1},x_{2},\dots,x_{d} \rangle$.  Moreover, since $d > d_{3}(A)$,
  we may assume that the order of~$x_{1}$ is coprime to~$3$.  Define
  $y_{1} = (x_{1},(1\;2\;3))$, \ $y_{2} = (x_{2},(1\;2)(3\;4))$ and
  $y_{i} = (x_{i},1)$ for $i \geq 3$.  Set $H = \langle
  y_{1},y_{2},\dots,y_{d} \rangle$.  Then $H$~contains $(1,(1\;2\;3))$
  as a power of~$y_{1}$ and it contains $[y_{1},y_{2}] =
  (1,(1\;3)(2\;4))$.  Since these generate the copy of~$A_{4}$, it now
  follows that $H = \bar{A} \times A_{4}$.  Hence $d(\bar{A} \times
  A_{4}) \leq d$.  Substituting this into
  Equation~\eqref{eq:d(W)-bound-A4} completes the verification that
  $d(W) \leq d$ and establishes the proposition.
\end{proof}

We now turn to consider the wreath product $W = A \wr A_{n}$ of a
finite abelian group~$A$ by an alternating group of degree~$n \geq
5$.  The argument runs broadly parallel to that for the case $n = 4$.

\begin{lemma}
  \label{lem:h-for-An}
  Let $\bar{A}$~be a direct product of elementary abelian groups and
  let $n \geq 5$.  Let $\bar{W} = \bar{A} \wr A_{n}$ be the wreath
  product of~$\bar{A}$ by the alternating group of degree~$n$ with
  respect to its natural action and let $p$~be a prime that does not
  divide~$n$.  Then
  \begin{enumerate}
  \item \label{i:H1-arb}
    $\order{\Cohom{A_{n},I_{p}}} \leq p^{2}$;
  \item \label{i:s-arb}
    $s_{\bar{W}}(I_{p}) \leq d_{p}(\bar{A}) + 2$;
  \item \label{i:h-arb}
    $h_{\bar{W}}(I_{p}) \leq \max(2, d_{p}(\bar{A}) )$.
  \end{enumerate}
\end{lemma}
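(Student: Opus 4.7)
The plan is to mirror the structure of Lemma~\ref{lem:h4-big-p}: establish the bound on $\Cohom{A_n, I_p}$ in part~(i) by direct derivation counting, then deduce parts~(ii) and~(iii) by the same calculations that worked in the $n=4$ case.

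For part~(i), I would first compute $\order{\Inn{A_n, I_p}}$. Since $A_n$ acts transitively on~$\Omega$, any $A_n$-fixed vector in $V = \Field{p}^n$ must be constant, and such a constant vector lies in~$I_p$ only when it is zero (because $p \nmid n$). Hence $\Cent{I_p}{A_n} = 0$ and $\order{\Inn{A_n, I_p}} = p^{n-1}$. To bound $\order{\Der{A_n, I_p}}$, I would select a 2-element generating set of~$A_n$ with parities chosen so that both lie in~$A_n$: for instance $\sigma = (1\;2\;3)$ together with $\tau = (1\;2\;\dots\;n)$ when $n$~is odd, and $\tau = (2\;3\;\dots\;n)$ when $n$~is even. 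Applying $\delta$ to $\sigma^3 = 1$ yields the identity $\sigma^{\delta} + (\sigma^{\delta})^{\sigma} + (\sigma^{\delta})^{\sigma^{2}} = 0$, which forces $x_1 + x_2 + x_3 = 0$ and $3x_i = 0$ for $i \geq 4$; together with $\sigma^{\delta} \in I_p$, this pins $\sigma^{\delta}$ into a subspace of $\Field{p}$-dimension at most~$2$ whenever $p \neq 3$. A parallel analysis of $\tau^m = 1$ (with $m = n$ or $n-1$) imposes constraints on $\tau^{\delta}$ — either the trivial one $\sum y_j = 0$, when $\tau$~is a full $n$-cycle, or $y_1 = 0$ and $\sum_{j \geq 2} y_j = 0$, when $\tau$~fixes the point~$1$. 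For $p \neq 3$, combining these bounds gives $\order{\Der{A_n, I_p}} \leq p^{n+1}$ and hence $\order{\Cohom{A_n, I_p}} \leq p^{2}$.

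The main obstacle is the case $p = 3$: the constraint $3x_i = 0$ becomes vacuous and the bound on $\sigma^{\delta}$ degenerates to $p^{n-2}$, so that the orders of $\sigma$ and~$\tau$ alone no longer suffice. To handle this I would either switch to a generating set better adapted to $p = 3$ (for $A_5$, two overlapping 3-cycles such as $(1\;2\;3)$ and $(3\;4\;5)$ already give the bound, and I would seek analogous pairs for larger~$n$), or invoke further relations such as $(\sigma\tau)^{k} = 1$ or $[\sigma, \tau]^{k} = 1$ to tighten the joint constraint on $(\sigma^{\delta}, \tau^{\delta})$. The proof of~(i) stands or falls with this delicate choice, and verifying that some 2-element generating set exists giving the uniform bound $\order{\Cohom{A_n, I_p}} \leq p^{2}$ for every admissible pair $(n,p)$ with $p \nmid n$ is the technical heart of the argument.

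Parts~(ii) and~(iii) then follow as in Lemma~\ref{lem:h4-big-p}. For~(ii): because $A_n$ acts faithfully on~$I_p$, the centralizer $\Cent{\bar{W}}{I_p}$ equals the base group of~$\bar{W}$, so $\bar{W}/\Cent{\bar{W}}{I_p} \cong A_n$. Combining Lemma~\ref{lem:Lucchini-facts}\ref{i:Lucchini-s}, the equality $\delta_{\bar{W}}(I_p) = d_p(\bar{A})$ from Lemma~\ref{lem:chief-Ip}, and $\End{I_p} = \Field{p}$ from Lemma~\ref{lem:Ip}\ref{i:Ip-coprime}, we obtain $s_{\bar{W}}(I_p) = d_p(\bar{A}) + \dim_{\Field{p}} \Cohom{A_n, I_p} \leq d_p(\bar{A}) + 2$. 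For~(iii): substitute $r_{\bar{W}}(I_p) = n - 1 \geq 4$ and the bound from~(ii) into Formula~\eqref{eq:h_G}; a case split on whether $d_p(\bar{A}) \leq 2$ or $d_p(\bar{A}) \geq 3$ then yields $\lfloor (d_p(\bar{A}) + 1)/(n-1) \rfloor + 2 \leq \max(2, d_p(\bar{A}))$, as required.
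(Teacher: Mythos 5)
Your parts (ii) and (iii) are correct and essentially identical to the paper's, and your overall strategy for part (i) --- bound $\order{\Der{A_{n},I_{p}}}$ over a two-element generating set, compute $\order{\Inn{A_{n},I_{p}}} = p^{n-1}$ from triviality of the fixed points, and divide --- is also the paper's. But there is a genuine gap in part (i), and you have correctly located it yourself: your choice of $\sigma = (1\;2\;3)$ as the generator whose image you constrain makes $p=3$ the bad prime, and for $p=3$ the relation $\sigma^{3}=1$ only pins $\sigma^{\delta}$ into a subspace of dimension $n-2$, which is far too large for $n \geq 5$. Your proposed repairs are not carried out: the overlapping pair $(1\;2\;3)$, $(3\;4\;5)$ does happen to give the bound for $A_{5}$, but no generating set or supplementary relation is exhibited that works uniformly for all $n$, and a commutation argument adapted to a $3$\nbd cycle runs into trouble already at $n=5$, where the pointwise stabilizer of $\{1,2,3\}$ in $A_{5}$ is trivial yet two coordinates remain unconstrained. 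As written, part (i) is unproved for $p = 3$.

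The paper sidesteps this entirely by constraining the image of the \emph{involution} $(1\;2)(3\;4)$ instead, taking as second generator the $(n-1)$\nbd cycle $(2\;4\;5\,\dots\,n)$ or $(2\;3\;4\,\dots\,n)$ and leaving its image unconstrained. Applying $\delta$ to $\bigl((1\;2)(3\;4)\bigr)^{2}=1$ gives $x_{1}+x_{2} = x_{3}+x_{4} = 2x_{5} = \dots = 2x_{n} = 0$, which for every odd $p$ immediately leaves at most $p^{2}$ choices; the only bad prime is now $p=2$, and since $p \nmid n$ this forces $n$ odd. That case is closed by a commutation argument: any $\sigma$ fixing $\{1,2,3,4\}$ pointwise commutes with $(1\;2)(3\;4)$, and since $n \neq 6$ the pointwise stabilizer of $\{1,2,3,4\}$ is transitive on the remaining $n-4$ points, forcing $x_{5} = \dots = x_{n}$ and then $(n-4)x_{5}=0$ with $n-4$ odd, so these coordinates vanish. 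The lesson is that the constrained generator should be chosen so that its order is a prime for which the residual case can actually be handled; an involution works because $p=2$, $p \nmid n$ forces the parity condition that drives the commutation argument. To salvage your write-up you would need to replace $\sigma = (1\;2\;3)$ by $(1\;2)(3\;4)$ (or supply a complete, uniform treatment of $p=3$), after which parts (ii) and (iii) go through exactly as you state them.
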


\begin{proof}
  The method is similar to Lemma~\ref{lem:h4-big-p}.  For
  part~\ref{i:H1-arb}, we determine an upper bound for the number of
  derivations $\delta \colon A_{n} \to I_{p}$.  Observe that
  $A_{n}$~can be generated by $(1\;2)(3\;4)$ together with the cycle
  $(2\;4\;5\,\dots\,n)$ or $(2\;3\;4\,\dots\,n)$, depending upon
  whether $n$~is odd or even.  A derivation~$\delta$ is then
  determined by its images on the chosen two generators.  We shall
  find a restriction for the number of choices for the image $x =
  (x_{1},x_{2},\dots,x_{n}) = \bigl((1\;2)(3\;4)\bigr)^{\delta}$.
  Applying~$\delta$ to the formula $\big( (1\;2)(3\;4) \bigr)^{2} = 1$
  yields the equations
  \[
  x_{1} + x_{2} = x_{3} + x_{4} = 2x_{5} = 2x_{6} = \dots = 2x_{n} = 0.
  \]
  Hence if $p \neq 2$, there are at most $p^{2}$~choices for~$x$.

  Let us assume that $p = 2$, so necessarily $n$~is odd.  If
  $\sigma$~is any element of~$A_{n}$ that fixes the points
  in~$\{1,2,3,4\}$, then upon applying~$\delta$ to the formula
  $(1\;2)(3\;4) \, \sigma = \sigma \, (1\;2)(3\;4)$, we deduce that
  \[
  x^{\sigma} + \sigma^{\delta} = (\sigma^{\delta})^{(1\;2)(3\;4)} + x.
  \]
  Hence $x_{i\sigma} = x_{i}$ for all $i \geq 5$ as
  $(1\;2)(3\;4)$~fixes the corresponding coordinate
  of~$\sigma^{\delta}$.  Since $n \neq 6$, the pointwise stabilizer of
  $\{1,2,3,4\}$ in~$A_{n}$ is transitive on the remaining $n-4$~points
  of~$\Omega$.  This gives us enough freedom in the choice of~$\sigma$
  to deduce that $x_{5} = x_{6} = \dots = x_{n}$.  Combining this with
  the facts that $x \in I_{p}$ and $x_{1}+x_{2} = x_{3}+x_{4} = 0$
  yields $(n-4)x_{5} = 0$.  Thus $x_{5} = x_{6} = \dots = x_{n} = 0$
  since $n-4$~is odd and $p = 2$.  In conclusion, when $p = 2$, there
  are also at most $p^{2}$~choices for $x = \bigl( (1\;2)(3\;4)
  \bigr)^{\delta}$.

  The image of the other chosen generator for~$A_{n}$ under the
  derivation~$\delta$ is some element of~$I_{p}$.  We therefore deduce
  that $\order{\Der{A_{n},I_{p}}} \leq p^{n+1}$.  By the same argument
  as in Lemma~\ref{lem:h4-big-p}, $\order{\Inn{A_{n},I_{p}}} =
  \order{I_{p}} = p^{n-1}$ and therefore $\order{\Cohom{A_{n},I_{p}}}
  \leq p^{2}$.  By Lemma~\ref{lem:Ip}\ref{i:Ip-coprime}, $\End{I_{p}}
  \cong \Field{p}$ and $r_{\bar{W}}(I_{p}) = n-1$.  Part~\ref{i:s-arb}
  then follows from Lemma~\ref{lem:Lucchini-facts}\ref{i:Lucchini-s},
  while part~\ref{i:h-arb} follows from the formula~\eqref{eq:h_G}
  for~$h_{\bar{W}}(I_{p})$ and the fact that $n \geq 5$.
\end{proof}

We now complete the section by determining the number of generators of
a wreath product~$A \wr A_{n}$ of an abelian group~$A$ by an
alternating group of degree $n \geq 5$.

\begin{prop}
  \label{prop:wreath-An}
  Let $A$~be a finite abelian group and $n$~be an integer with $n \geq
  5$.  Define $W = A \wr A_{n}$ be the wreath product of~$A$
  by~$A_{n}$ with respect to the natural action of~$A_{n}$ on
  $n$~points.  Then
  \[
  d(W) = \max(2, d(A)).
  \]
\end{prop}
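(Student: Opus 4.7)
The plan is to mirror the proof of Proposition~\ref{prop:wreath-A4}, modifying it to cope with the fact that the wreath product $\bar{A} \wr A_{n}$ is no longer soluble when $n \geq 5$. For the lower bound, since $A_{n}$ is perfect and $A$ is abelian, the abelianization of $W$ is the coinvariant quotient $A$ of the base group, whilst the top quotient $W/B \cong A_{n}$ requires two generators; hence $d(W) \geq \max(2, d(A))$.

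Set $d = \max(2, d(A))$ and write $B$ for the base group. The inclusion $\Frat{B} \leq \Frat{W}$ (a standard consequence of $B$ being normal in $W$) yields $d(W) = d(\bar{W})$ where $\bar{W} = W/\Frat{B} \cong \bar{A} \wr A_{n}$ and $\bar{A} = A/\Frat{A}$ is a direct product of elementary abelian groups. Since $\bar{W}$ is no longer soluble, Lemma~\ref{lem:Gruenberg}\ref{i:soluble-pr} does not apply directly. Instead split on the value of $\pr{\bar{W}}$: if $\pr{\bar{W}} > 0$ then, with the soluble normal subgroup $\bar{B}$ of $\bar{W}$, Lemma~\ref{lem:Gruenberg}\ref{i:quot-pr} yields $d(\bar{W}) = d(\bar{W}/\bar{B}) = d(A_{n}) = 2 \leq d$, and the proof is complete. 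Otherwise $\pr{\bar{W}} = 0$, so Lemma~\ref{lem:Roggenkamp} gives $d(\bar{W}) = d(I_{\bar{W}})$.

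Apply Lemma~\ref{lem:Lucchini-facts}\ref{i:Lucchini-commquot} with $N = [\bar{B}, A_{n}]$, for which $\bar{W}/N \cong \bar{A} \times A_{n}$. Any non-trivial irreducible $\bar{W}$\nbd module $M$ with $\delta_{\bar{W}/N}(M) < \delta_{\bar{W}}(M)$ must occur as a complemented chief factor inside $\bar{B}$, and therefore by Lemma~\ref{lem:chief-Ip} is isomorphic to $I_{p}$ for some prime divisor $p$ of $\order{A}$ with $p \nmid n$. Lemma~\ref{lem:h-for-An}\ref{i:h-arb} then bounds $h_{\bar{W}}(I_{p}) \leq \max(2, d_{p}(\bar{A})) \leq d$, while Lemma~\ref{lem:Roggenkamp} applied to $\bar{A} \times A_{n}$ gives $d(I_{\bar{A} \times A_{n}}) \leq d(\bar{A} \times A_{n})$. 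It therefore remains to verify $d(\bar{A} \times A_{n}) \leq d$: pick generators $\bar{A} = \langle x_{1}, \ldots, x_{d} \rangle$ and two generators $a,b$ of $A_{n}$, then set $y_{1} = (x_{1}, a)$, $y_{2} = (x_{2}, b)$ and $y_{i} = (x_{i}, 1)$ for $i \geq 3$. Since $[y_{1}, y_{2}] = (1, [a, b])$ and the normal closure of $[a, b] \neq 1$ in the simple group $A_{n}$ is all of $A_{n}$, the subgroup $\langle y_{1}, \ldots, y_{d} \rangle$ contains $\{1\} \times A_{n}$, from which each $(x_{i}, 1)$ is recovered, so this subgroup equals $\bar{A} \times A_{n}$.

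The main obstacle is the loss of solubility of $\bar{W}$, which both forces the case split on $\pr{\bar{W}}$ and necessitates an independent argument that $d(\bar{A} \times A_{n}) \leq d$; the required cohomological estimates were already prepared in Lemma~\ref{lem:h-for-An}, so the remainder of the argument follows the template of Proposition~\ref{prop:wreath-A4}.
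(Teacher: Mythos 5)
Your proposal is correct and follows essentially the same route as the paper's proof: the same Frattini reduction to $\bar{A}\wr A_{n}$, the same case split on $\pr{\bar{W}}$ using Lemma~\ref{lem:Gruenberg}, and the same application of Lemma~\ref{lem:Lucchini-facts}\ref{i:Lucchini-commquot} with $N=[\bar{B},A_{n}]$ combined with Lemmas~\ref{lem:chief-Ip} and~\ref{lem:h-for-An}. The only cosmetic differences are that the paper gets the lower bound from the surjection onto $A\times A_{n}$ and quotes \cite[Lemma~5.1]{Wiegold} for $d(\bar{A}\times A_{n})=\max(d(\bar{A}),d(A_{n}))$, whereas you reprove the latter directly with a (valid) commutator argument.
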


\begin{proof}
  The method is similar to that used in
  Proposition~\ref{prop:wreath-A4}.  Define $d = \max (2,d(A))$.
  First $d(W) \geq d$ since there is a surjective homomorphism onto $A
  \times A_{n}$.  For the reverse inequality, write $B$~for the base
  group of~$W$, \ $\bar{A} = A/\Frat{A}$ and $\bar{W} = W/\Frat{B} =
  \bar{A} \wr A_{n}$.  If it were the case that $\pr{\bar{W}} > 0$,
  then take $N = \bar{B} = B/\Frat{B}$ in
  Lemma~\ref{lem:Gruenberg}\ref{i:quot-pr} to see that $d(W) =
  d(\bar{W}) = d(\bar{W}/\bar{B}) = 2$ and the reverse inequality
  would follow.  We shall therefore assume $\pr{\bar{W}} = 0$ and
  arguing as in Proposition~\ref{prop:wreath-A4} then shows that
  \[
  d(W) = d(\bar{W}) = d(I_{\bar{W}}) \leq \max_{p}( d(\bar{A} \times
  A_{n}), h_{\bar{W}}(I_{p}))
  \]
  where the maximum is taken over all prime divisors~$p$
  of~$\order{A}$ with $p \nmid n$.  Lemma~\ref{lem:h-for-An} tells us
  that $h_{\bar{W}}(I_{p}) \leq d$ for all such~$p$.  It is easy to
  see that $d(\bar{A} \times A_{n}) = \max(d(\bar{A}),d(A_{n}))$
  \ (see, for example, \cite[Lemma~5.1]{Wiegold}) and hence $d(\bar{A}
  \times A_{n}) \leq d$.  This completes the proof.
\end{proof}

\section{Proof of the Main Theorem}

We now establish Theorem~\ref{thm:main}.  Accordingly, let $k \geq 2$
and let $G_{1}$,~$G_{2}$, \dots,~$G_{k}$ be a sequence of non-trivial
finite groups each of which is either an alternating group, a
symmetric group or a cyclic group.  Let $\Omega_{i}$~be the set upon
which the group~$G_{i}$ acts naturally and define
\[
W = G_{k} \wr G_{k-1} \wr \dots \wr G_{1}
\]
to be the iterated wreath product constructed with respect to these
actions.  Let us also define
\[
H = G_{k} \wr G_{k-1} \wr \dots \wr G_{2}
\]
and set $A = H/H'$ to be the abelianization of~$H$; that is,
\[
A = (G_{k}/G_{k}') \times (G_{k-1}/G_{k-1}') \times \dots \times (G_{2}/G_{2}').
\]
Set $\Gamma = \Omega_{k-1} \times \Omega_{k-2} \times \dots \times
\Omega_{1}$, which is the set upon which $G_{k-1} \wr \dots \wr G_{1}$~acts.

If $G_{k}$~is either an alternating group or a symmetric group of
degree~$n$ with $n \geq 5$, then its socle is a non-abelian simple
group and a standard argument shows that the copy of~$A_{n}^{\Gamma}$
in the base group of~$W$ is the unique minimal normal subgroup of this
wreath product.

Combining our work in the previous section with results of
Lucchini~\cite{Lucc97} establishes the number of generators required
for the wreath product of the abelian group~$A$ by our selected choice
of group~$G_{1}$.

\begin{lemma}
  \label{lem:keylemma-abelian}
  The number of generators of the wreath product~$A \wr G_{1}$ is
  given by
  \[
  d(A \wr G_{1}) = \begin{cases}
    \max (2, d(A), d_{3}(A)+1) &\text{if $G_{1} = A_{4}$}, \\
    \max (2, d(A)) &\text{if $G_{1} = A_{n}$ with $n \geq 5$}, \\
    \max (2, d(A), d_{2}(A)+1) &\text{if $G_{1} = S_{n}$ with $n \geq
      3$}, \\
    d(A) + 1 &\text{if $G_{1}$~is cyclic}.
  \end{cases}
  \]
\end{lemma}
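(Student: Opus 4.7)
The plan is to dispatch the four cases of $G_{1}$ separately. Three of them are immediate from results already in hand: when $G_{1} = A_{4}$, the formula is Proposition~\ref{prop:wreath-A4}; when $G_{1} = A_{n}$ with $n \geq 5$, it is Proposition~\ref{prop:wreath-An}; and when $G_{1} = S_{n}$ with $n \geq 3$, the corresponding formula for $d(A \wr S_{n})$ with abelian base is given by Lucchini's~\cite[Theorem~8]{Lucc97}. The only case requiring fresh argument is when $G_{1} = C_{n}$ is cyclic of order~$n$ acting regularly, and here the target formula is $d(A \wr C_{n}) = d(A) + 1$.

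The upper bound in the cyclic case is elementary: placing $d(A)$ generators of~$A$ into the first coordinate of the base group $B = A^{n}$ and adjoining a generator~$c$ of the top~$C_{n}$ produces $d(A) + 1$ elements whose $C_{n}$\nbd conjugates distribute every generator of~$A$ through all $n$~coordinates, recovering the whole of $W = A \wr C_{n}$.

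The main obstacle is the reverse inequality $d(W) \geq d(A) + 1$. The abelianization $W/W' \cong A \times C_{n}$ alone is insufficient, since $d(A \times C_{n}) = d(A)$ whenever $\gcd(\order{A}, n) = 1$, and some additional structure must be exploited. To force the extra generator, pick a prime~$p$ achieving $d_{p}(A) = d(A)$; by quotienting the base by its Frattini subgroup and projecting to the $p$\nbd primary component, we reduce to the situation where $A = \Field{p}^{d(A)}$, so that $B$ becomes the free $\Field{p}C_{n}$\nbd module $(\Field{p}C_{n})^{d(A)}$ of rank~$d(A)$. Given $d$ generators of this reduced wreath product, one of them (say~$w_{1}$) may be taken to map to a generator of~$C_{n}$, with the remaining $w_{2},\dots,w_{d}$ lying in~$B$. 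A direct computation identifies $\langle w_{1},\dots,w_{d}\rangle \cap B$ as the $\Field{p}C_{n}$\nbd submodule of~$B$ generated by $w_{2},\dots,w_{d}$ together with the norm $w_{1}^{n}$, which necessarily lies in the $C_{n}$\nbd fixed subspace~$B^{C_{n}}$. This norm vanishes modulo the Jacobson radical of $\Field{p}C_{n}$ when $p \mid n$, and lies entirely in the trivial isotypic summand when $p \nmid n$; in either case, a Nakayama-style argument via the isotypic decomposition shows that $w_{1}^{n}$ makes no contribution to the module generators needed for the non-trivial constituents of~$B$. Consequently $d - 1 \geq d(A)$, completing the proof.
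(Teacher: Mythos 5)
Your proposal is correct, and for three of the four cases it coincides with the paper's proof: the alternating cases are Propositions~\ref{prop:wreath-A4} and~\ref{prop:wreath-An}, and the symmetric case is part~(ii) of Lucchini's Theorem~8 in~\cite{Lucc97}. The divergence is in the cyclic case, where the paper simply quotes \cite[Corollary~4.4]{Lucc97} while you give a self-contained argument; your argument is sound and effectively reproves the needed special case of that corollary (equivalently, the base case of Woryna's theorem). The reduction to $\Field{p}^{d(A)} \wr C_{n}$ for a prime $p$ with $d_{p}(A) = d(A)$ is a legitimate passage to a quotient, the identification of $\langle w_{1},\dots,w_{d}\rangle \cap B$ with the $\Field{p}C_{n}$\nbd submodule generated by $w_{2},\dots,w_{d}$ and the norm $w_{1}^{n}$ is correct (since $B$ is abelian, conjugation by $w_{1}^{j}$ acts on $B$ as $c^{j}$ does), and the radical/isotypic analysis of where $\hat{N} = \sum_{g \in C_{n}} g$ lands does force $d - 1 \geq d(A)$. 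Two small points you should make explicit in a final write-up: the normalization putting $w_{2},\dots,w_{d}$ into~$B$ with $w_{1}$ mapping onto a generator of~$C_{n}$ requires replacing the generating tuple by a Nielsen-equivalent one (justified because the images generate the cyclic group $C_{n}$), and in the semisimple case $p \nmid n$ your argument needs a non-trivial isotypic constituent of $\Field{p}C_{n}$ to exist, which holds precisely because $G_{1}$ is non-trivial, so $n \geq 2$. The trade-off is the usual one: the paper's route is shorter but leans on an external result, whereas yours makes the cyclic lower bound transparent at the cost of half a page of module-theoretic bookkeeping.
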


\begin{proof}
  We have established the formula when $G_{1}$~is an alternating group
  of degree~$\geq 4$ in Propositions~\ref{prop:wreath-A4}
  and~\ref{prop:wreath-An}.  The formula when $G_{1}$~is a symmetric
  group of degree~$n \geq 3$ is part~(ii)
  of~\cite[Theorem~8]{Lucc97}.  Finally, the formula when $G_{1}$~is
  cyclic is a direct consequence of \cite[Corollary~4.4]{Lucc97}.
\end{proof}

The main step in the proof of Theorem~\ref{thm:main} is the following
lemma which reduces the number of generators of~$W$ to what is
established in Lemma~\ref{lem:keylemma-abelian}.

\begin{lemma}
  \label{lem:induction}
  Let $W$~and~$A$ be as defined above.  Then
  \begin{equation}
    d(W) = \max (2, d(A \wr G_{1})).
    \label{eq:reduction}
  \end{equation}
\end{lemma}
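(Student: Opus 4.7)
The lower bound $d(W) \geq \max(2, d(A \wr G_{1}))$ is immediate: the coordinatewise map $H \twoheadrightarrow H/H' = A$ extends to the base group of $W$, which by the associativity of the permutational wreath product may be regarded as $H \wr G_{1}$, and yields a surjection $W \twoheadrightarrow A \wr G_{1}$; hence $d(W) \geq d(A \wr G_{1})$, and $d(W) \geq 2$ follows from the visible non-cyclicity of $W$.

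For the upper bound, I would view $W = H \wr G_{1}$ with $G_{1}$ transitive of degree $n = \length{\Omega_{1}}$ and apply Lemma~\ref{lem:Lucchini-facts}\ref{i:Lucc-augment} to obtain
\[
d(I_{W}) = \max\bigl(d(I_{A \wr G_{1}}),\ \lfloor (d(I_{H}) - 2)/n \rfloor + 2\bigr).
\]
Lemma~\ref{lem:Roggenkamp} then gives $d(W) = \pr{W} + d(I_{W})$, so the task reduces to bounding $\pr{W}$ together with each term of the above maximum.

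I would split on $\pr{W}$. If $\pr{W} > 0$, Lemma~\ref{lem:Gruenberg}\ref{i:quot-pr} yields $d(W) = d(W/N)$ for any soluble normal subgroup $N$ of $W$; taking $N$ to be the soluble radical leaves a quotient that is an iterated wreath product of the non-soluble $G_{i}$'s, and such a product is $2$-generated by \cite{Bhatt,MRQ2}, giving $d(W) = 2$. If $\pr{W} = 0$ then $d(W) = d(I_{W})$; the first term $d(I_{A \wr G_{1}}) \leq d(A \wr G_{1})$ is immediate, and the second term is controlled by induction on $k$. The base case $k=2$ is treated directly using Lemma~\ref{lem:keylemma-abelian} together with \cite[Theorem~8]{Lucc97} and the analysis of Section~\ref{sec:AwrAn}. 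In the inductive step, applying the lemma to $H = G_{k} \wr \dots \wr G_{2}$ combined with Lemma~\ref{lem:keylemma-abelian} produces an explicit value for $d(H) \geq d(I_{H})$, reducing matters to the arithmetic inequality
\[
\lfloor (d(H) - 2)/n \rfloor + 2 \leq \max\bigl(2,\ d(A \wr G_{1})\bigr).
\]

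The main obstacle is this final arithmetic verification. Writing $A = A^{*} \times G_{2}/G_{2}'$ with $A^{*} = (G_{k}/G_{k}') \times \dots \times (G_{3}/G_{3}')$, the comparison unfolds into a case analysis on the types of $G_{1}$ and $G_{2}$ (alternating of degree $4$, alternating of degree $\geq 5$, symmetric of degree $\geq 3$, or cyclic). In every configuration, the contribution to $d(A)$ and to the relevant $d_{p}(A)$ picked up from $G_{2}/G_{2}'$ in passing from $A^{*}$ to $A$ — as dictated by the formulas of Lemma~\ref{lem:keylemma-abelian} — turns out to absorb both the floor and the division by $n$ that appear in the Lucchini bound, so that the inequality holds uniformly.
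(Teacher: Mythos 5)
Your skeleton (lower bound via the surjection of $W$ onto $A \wr G_{1}$, a split on $\pr{W}$, and Lemma~\ref{lem:Lucchini-facts}\ref{i:Lucc-augment} plus induction when $\pr{W}=0$) matches the paper's, but your handling of the case $\pr{W}>0$ does not work. The soluble radical of $W$ does not strip out soluble factors sitting in the middle of the iterated wreath product: for $W = A_{5} \wr C_{2} \wr A_{5}$, any soluble normal subgroup meets the non-soluble base group $A_{5}^{10}$ trivially, hence centralizes it, hence is trivial, so the soluble radical is trivial and the quotient is $W$ itself --- not an iterated wreath product of the non-soluble $G_{i}$'s. The results of Bhattacharjee and of Quick that you invoke require every factor to be a non-abelian simple group, so they do not apply to such a quotient. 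Worse, the conclusion $d(W)=2$ cannot be right in general: it would contradict your own lower bound $d(W) \geq d(A \wr G_{1})$ whenever $d(A \wr G_{1}) > 2$, and nothing in your argument excludes $\pr{W}>0$ in that situation. The paper instead peels off only the top layer $G_{k}$: when $G_{k}$ is soluble the base group $G_{k}^{\Gamma}$ is a soluble normal subgroup, so Lemma~\ref{lem:Gruenberg}\ref{i:quot-pr} gives $d(W) = d(G_{k-1} \wr \dots \wr G_{1})$ and induction applies (finishing via the surjection of $A \wr G_{1}$ onto $C \wr G_{1}$); when $G_{k} = A_{n}$ with $n \geq 5$ it uses Theorem~\ref{thm:LuccMen} applied to the unique minimal normal subgroup $G_{k}^{\Gamma}$; and when $G_{k} = S_{n}$ it quotients by the unique minimal normal subgroup $A_{n}^{\Gamma}$ to reduce to the case of $C_{2}$ on top.

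Two further gaps. Your base case $k=2$ cites only Lemma~\ref{lem:keylemma-abelian}, Lucchini's Theorem~8 and Section~\ref{sec:AwrAn}, all of which concern $A \wr G_{1}$ with $A$ \emph{abelian}; when $G_{2}$ is non-abelian you still need Lemma~\ref{lem:Lucchini-facts}\ref{i:Lucc-solwreath} (for $G_{2} \in \{A_{4}, S_{3}, S_{4}\}$) or Theorem~\ref{thm:LuccMen} (for $G_{2} = A_{n}$ or $S_{n}$ with $n \geq 5$). And the ``arithmetic verification'' you defer is where the real content of the $\pr{W}=0$ branch lies: the paper avoids any case analysis on $G_{1}$ by proving the uniform bounds $d(B \wr G_{2}) \leq d(A)+1$ (from $d_{p}(B) \leq d_{p}(A)$) and $d(A \wr G_{1}) \geq \max(2, d(A))$, whence
\[
\left\lfloor \frac{d(I_{H})-2}{n} \right\rfloor + 2 \leq
\left\lfloor \frac{d(A \wr G_{1})-1}{n} \right\rfloor + 2 \leq
d(A \wr G_{1})
\]
for $n \geq 2$. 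Asserting that the cases ``turn out to'' absorb the floor and the division is not a proof; you need these explicit estimates (or their case-by-case equivalents) written down.
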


\begin{proof}
  We proceed by induction on~$k$.  Suppose first that $k = 2$.  If
  $G_{2}$~is the alternating group~$A_{4}$, the symmetric group
  $S_{3}$ or~$S_{4}$, or is cyclic, then $G_{2}$~is soluble with
  $d(G_{2}) \leq 2$ and
  Lemma~\ref{lem:Lucchini-facts}\ref{i:Lucc-solwreath} then gives the
  result.  If $G_{2}$~is alternating or symmetric of degree $n \geq
  5$, then $N = A_{n}^{\Omega_{1}} = (H')^{\Omega_{1}}$ is the unique
  minimal normal subgroup of~$W$ with quotient~$W/N$ isomorphic to
  $(H/H') \wr G_{1}$.  Theorem~\ref{thm:LuccMen} tells us that $d(W) =
  \max(2,d(W/N))$, which is Equation~\eqref{eq:reduction} in this
  case.

  Now assume that $k > 2$ and that the result holds for wreath
  products of alternating, symmetric or cyclic groups involving fewer
  than $k$~factors.  We first consider the case when $A$~is trivial;
  that is, when $H$~is perfect.  Thus $G_{2}$, \dots,~$G_{k}$ are all
  alternating groups of degrees at least~$5$.  Then the base group~$N
  = G_{k}^{\Gamma}$ is the unique minimal normal subgroup of~$W$ and
  so, by Theorem~\ref{thm:LuccMen},
  \[
  d(W) = \max(2, d(W/N)) = \max(2, d(G_{k-1} \wr \dots \wr G_{1})).
  \]
  Applying an induction argument, we deduce $d(W) = \max(d(G_{1}),2)$
  and so Equation~\eqref{eq:reduction} now follows in this case.

  For the remainder of the proof, we shall assume that $A \neq
  \trivsubgp$ and $H$~is not perfect.  Note then that $A \wr G_{1}$~is
  not cyclic and there is a homomorphism from~$W$ onto~$A \wr G_{1}$.
  Hence
  \[
  d(W) \geq d(A \wr G_{1}) = \max(2, d(A \wr G_{1}))
  \]
  and it remains to establish the reverse inequality.  We split into
  two cases according to the presentation rank of~$W$.  Suppose first
  that $\pr{W} = 0$.  Let $B$~be the abelianization of the iterated
  wreath product $G_{k} \wr G_{k-1} \wr \dots \wr G_{3}$.  Then, by
  induction,
  \[
  d(H) = \max(2, d(B \wr G_{2}) ).
  \]
  By Lemma~\ref{lem:keylemma-abelian}, the number of generators of $B
  \wr G_{2}$ is given by
  \[
  d(B \wr G_{2}) = \begin{cases}
    \max(2, d(B), d_{3}(B)+1 ) &\text{if $G_{2} = A_{4}$}, \\
    \max(2, d(B)) &\text{if $G_{2} = A_{n}$ with $n \geq 5$}, \\
    \max(2, d(B), d_{2}(B)+1 ) &\text{if $G_{2} = S_{n}$ with $n \geq
      3$}, \\
    d(B) + 1 &\text{if $G_{2}$~is cyclic}.
  \end{cases}
  \]
  Note that $A = B \times (G_{2}/G_{2}')$, so certainly $d_{p}(B) \leq
  d_{p}(A)$ for each prime~$p$.  Substituting this into the above
  formula yields $d(B \wr G_{2}) \leq d(A) + 1$ and hence
  \[
  d(H) \leq d(A \wr G_{1}) + 1
  \]
  by a further application of Lemma~\ref{lem:keylemma-abelian} to
  compute~$d(A \wr G_{1})$.

  Since $W$~is assumed to have zero presentation rank,
  \[
  d(W) = d(I_{W}) = \max \left( d(I_{A \wr G_{1}}), \left\lfloor
  \frac{d(I_{H}) - 2}{n} \right\rfloor + 2 \right)
  \]
  by Lemma~\ref{lem:Lucchini-facts}\ref{i:Lucc-augment}.  Certainly
  $d(I_{A \wr G_{1}}) \leq d(A \wr G_{1})$.  On the other hand, using
  $d(I_{H}) \leq d(H) \leq d(A \wr G_{1}) + 1$, we deduce that
  \[
  \left\lfloor \frac{d(I_{H})-2}{n} \right\rfloor + 2 \leq
  \left\lfloor \frac{d(A \wr G_{1}) - 1}{n} \right\rfloor + 2 \leq
  d(A \wr G_{1})
  \]
  since $n \geq 2$ and $d(A \wr G_{1}) \geq 2$ from our assumption
  that $A \neq \trivsubgp$.  We have therefore established $d(W) \leq
  d(A \wr G_{1})$ and hence Equation~\eqref{eq:reduction} in the case
  that $\pr{W} = 0$.

  Now assume that $W$~has positive presentation rank.  Consider first
  the case that $G_{k}$~is soluble (that is, either cyclic or one of
  the groups~$A_{4}$, $S_{3}$ or~$S_{4}$).  Then the base group $N =
  G_{k}^{\Gamma}$ is soluble and so
  \[
  d(W) = d(W/N) = d(G_{k-1} \wr G_{k-2} \wr \dots \wr G_{1})
  \]
  by Lemma~\ref{lem:Gruenberg}\ref{i:quot-pr}.  Hence, by induction,
  \[
  d(W) = \max(2, d(C \wr G_{1}))
  \]
  where $C$~is the abelianization of $G_{k-1} \wr G_{k-2} \wr \dots
  \wr G_{2}$.  There is a surjective homomorphism from $A \wr G_{1}$
  onto~$C \wr G_{1}$ induced from that of $A = (G_{k}/G_{k}') \times
  (G_{k-1}/G_{k-1}') \times \dots \times (G_{2}/G_{2}')$ onto~$C$ and
  hence
  \[
  d(W) \leq \max(2, d(A \wr G_{1})),
  \]
  as required.

  If $G_{k}$~is an alternating group of degree~$\geq 5$, then the base
  group $N = G_{k}^{\Gamma}$ is the unique minimal normal subgroup
  of~$W$ and so Theorem~\ref{thm:LuccMen} yields
  \[
  d(W) = d(W/N) = d(G_{k-1} \wr G_{k-2} \wr \dots \wr G_{1})
  \]
  and the required inequality then follows by the same argument.

  Finally, if $G_{k}$~is a symmetric group of degree~$n \geq 5$, then
  $W$~has a unique minimal normal subgroup~$N$ isomorphic to the
  direct product~$A_{n}^{\Gamma}$ and
  \[
  d(W) = d(W/N) = d(C_{2} \wr G_{k-1} \wr G_{k-2} \wr \dots \wr G_{1})
  \]
  by Theorem~\ref{thm:LuccMen}.  Thus, we have reduced to the case
  when $G_{k}$~is cyclic of order~$2$ which has already been
  addressed by an earlier case.  This completes the induction step and
  establishes the lemma.
\end{proof}

The main theorem, Theorem~\ref{thm:main}, now follows by combining
Lemmas~\ref{lem:keylemma-abelian} and~\ref{lem:induction}.

\bigskip

As a final illustration of our results, we present the following
example.  Comparing this example to the proof of
Lemma~\ref{lem:induction} gives some insight into the key steps within
it.

\begin{example}
  Consider the wreath product $W = C_{2} \wr C_{2} \wr C_{3} \wr
  A_{n}$ where $n$~is an integer with $n \geq 5$.  According to the
  statement of our main theorem, we need to make use of $H = C_{2} \wr
  C_{2} \wr C_{3}$ and its abelianization $A = C_{2} \times C_{2}
  \times C_{3} \cong C_{2} \times C_{6}$.  Then $d(A) = 2$ and hence
  $d(W) = 2$ by Theorem~\ref{thm:main}.

  The theorem also tells us how many generators are required for the
  group~$H$.  Indeed, $d(H) = \max(2,d(B)+1)$ where $B$~is the
  abelianization of~$C_{2} \wr C_{2}$; that is, $B = C_{2} \times
  C_{2}$.  Hence $d(H) = 3$.  In particular, this illustrates that we
  use the abelianization $A = H/H'$ rather than the subgroup~$H$ when
  determining the number of generators of~$W$.

  The proof of the theorem does not explicitly construct a set of
  generators for the iterated wreath product~$W$.  For this particular
  case, we describe in this example one pair of generators for this $W
  = C_{2} \wr C_{2} \wr C_{3} \wr A_{n}$ in the case that $n$~is an
  \emph{odd} integer with $n \geq 5$.  The calculation that follows
  indicates some of the subtleties within such wreath products that
  may appear hidden when simply applying the powerful theory.

  We shall view~$W$ as acting on a rooted tree where the root has
  $n$~edges to the vertices at level~$1$, each vertex at level~$2$ has
  three edges to the vertices at level~$2$, and each vertex at levels
  $3$ and~$4$ has two edges to the vertices in the level below.
  Figure~\ref{fig:tree} illustrates these trees for the case when $n =
  5$.  The vertices at level~$1$ will be labelled $1$,~$2$ and~$3$
  and, if $\alpha$~labels a vertex at some level~$k$ ($1 \leq k \leq
  3$), then its children will be labelled $\alpha1$, $\alpha2$ and, in
  the case that $k = 1$ only, $\alpha3$.  Thus a vertex of level~$k$,
  for $1 \leq k \leq 4$, is denoted by a word $a_{1}a_{2}\dots a_{k}$
  of length~$k$ where $a_{1} \in \{1,2,\dots,n \}$, \ $a_{2} \in
  \{1,2,3\}$ and $a_{3},a_{4} \in \{1,2\}$ (if $k$~is sufficiently
  large that these are required).  To define an element of~$W$, we
  shall first describe a permutation that should be applied to the
  vertices of level~$4$, then follow it by a permutation at level~$3$,
  then apply a permutation at level~$2$, and finally apply a
  permutation of the vertices at level~$1$.  We shall use the
  terminology ``applying a permutation~$\sigma$ at a vertex~$\alpha$''
  to mean that one permutes the children of~$\alpha$ according
  to~$\sigma$.

  Define $x$~to be the element of~$W$ that is given by applying the
  transposition~$(1\;2)$ at vertex~$11$ (that is, it first swaps
  vertex~$111$ with~$112$), then applying the $3$\nbd cycle
  $(1\;2\;3)$ at vertex~$5$, and finally applying the permutation
  $(1\;2)(3\;4)$ to the $n$~vertices below the root.  Define $y$~to be
  the element of~$W$ that applies $(1\;2)$~at vertex~$111$ and applies
  the $(n-2)$\nbd cycle $(2\;4\;5\,\dots\,n)$ at the root.
  Figure~\ref{fig:tree} illustrates these permutations for the case~$n
  = 5$ where we attach the permutation to each vertex to indicate how
  its descendants should be permuted.  We define~$L$ to be the
  subgroup of~$W$ generated by $x$~and~$y$.

  \begin{figure}
    \begin{center}
    \begin{tikzpicture}[
        inner sep=0.8pt,
        level distance=20pt,
        level 1/.style={sibling distance=80pt},
        level 2/.style={sibling distance=25pt},
        level 3/.style={sibling distance=12pt},
        level 4/.style={sibling distance=7pt}
      ]
      \node (root) [circle,fill,label={$(1\;2)(3\;4)$}] {}
      child {node (1) [circle,fill] {}
        child {node (11) [circle,fill,label={180:$(1\;2)$}] {}
          child {node (111) [circle,fill] {}
            child {node (1111) [circle,fill] {} }
            child {node (1112) [circle,fill] {} } }
          child {node (112) [circle,fill] {}
            child {node (1121) [circle,fill] {} }
            child {node (1122) [circle,fill] {} } } }
        child {node (12) [circle,fill] {}
          child {node (121) [circle,fill] {}
            child {node (1211) [circle,fill] {} }
            child {node (1212) [circle,fill] {} } }
          child {node (122) [circle,fill] {}
            child {node (1221) [circle,fill] {} }
            child {node (1222) [circle,fill] {} } } }
        child {node (13) [circle,fill] {}
          child {node (131) [circle,fill] {}
            child {node (1311) [circle,fill] {} }
            child {node (1312) [circle,fill] {} } }
          child {node (132) [circle,fill] {}
            child {node (1321) [circle,fill] {} }
            child {node (1322) [circle,fill] {} } } } }
      child {node (2) [circle,fill] {}
        child {node (21) [circle,fill] {}
          child {node (211) [circle,fill] {}
            child {node (2111) [circle,fill] {} }
            child {node (2112) [circle,fill] {} } }
          child {node (212) [circle,fill] {}
            child {node (2121) [circle,fill] {} }
            child {node (2122) [circle,fill] {} } } }
        child {node (22) [circle,fill] {}
          child {node (221) [circle,fill] {}
            child {node (2211) [circle,fill] {} }
            child {node (2212) [circle,fill] {} } }
          child {node (222) [circle,fill] {}
            child {node (2221) [circle,fill] {} }
            child {node (2222) [circle,fill] {} } } }
        child {node (23) [circle,fill] {}
          child {node (231) [circle,fill] {}
            child {node (2311) [circle,fill] {} }
            child {node (2312) [circle,fill] {} } }
          child {node (232) [circle,fill] {}
            child {node (2321) [circle,fill] {} }
            child {node (2322) [circle,fill] {} } } } }
      child {node (3) [circle,fill] {}
        child {node (31) [circle,fill] {}
          child {node (311) [circle,fill] {}
            child {node (3111) [circle,fill] {} }
            child {node (3112) [circle,fill] {} } }
          child {node (312) [circle,fill] {}
            child {node (3121) [circle,fill] {} }
            child {node (3122) [circle,fill] {} } } }
        child {node (32) [circle,fill] {}
          child {node (321) [circle,fill] {}
            child {node (3211) [circle,fill] {} }
            child {node (3212) [circle,fill] {} } }
          child {node (322) [circle,fill] {}
            child {node (3221) [circle,fill] {} }
            child {node (3222) [circle,fill] {} } } }
        child {node (33) [circle,fill] {}
          child {node (331) [circle,fill] {}
            child {node (3311) [circle,fill] {} }
            child {node (3312) [circle,fill] {} } }
          child {node (332) [circle,fill] {}
            child {node (3321) [circle,fill] {} }
            child {node (3322) [circle,fill] {} } } } }
      child {node (4) [circle,fill] {}
        child {node (41) [circle,fill] {}
          child {node (411) [circle,fill] {}
            child {node (4111) [circle,fill] {} }
            child {node (4112) [circle,fill] {} } }
          child {node (412) [circle,fill] {}
            child {node (4121) [circle,fill] {} }
            child {node (4122) [circle,fill] {} } } }
        child {node (42) [circle,fill] {}
          child {node (421) [circle,fill] {}
            child {node (4211) [circle,fill] {} }
            child {node (4212) [circle,fill] {} } }
          child {node (422) [circle,fill] {}
            child {node (4221) [circle,fill] {} }
            child {node (4222) [circle,fill] {} } } }
        child {node (43) [circle,fill] {}
          child {node (431) [circle,fill] {}
            child {node (4311) [circle,fill] {} }
            child {node (4312) [circle,fill] {} } }
          child {node (432) [circle,fill] {}
            child {node (4321) [circle,fill] {} }
            child {node (4322) [circle,fill] {} } } } }
      child {node (5) [circle,fill,label={45:$(1\;2\;3)$}] {}
        child {node (51) [circle,fill] {}
          child {node (511) [circle,fill] {}
            child {node (5111) [circle,fill] {} }
            child {node (5112) [circle,fill] {} } }
          child {node (512) [circle,fill] {}
            child {node (5121) [circle,fill] {} }
            child {node (5122) [circle,fill] {} } } }
        child {node (52) [circle,fill] {}
          child {node (521) [circle,fill] {}
            child {node (5211) [circle,fill] {} }
            child {node (5212) [circle,fill] {} } }
          child {node (522) [circle,fill] {}
            child {node (5221) [circle,fill] {} }
            child {node (5222) [circle,fill] {} } } }
        child {node (53) [circle,fill] {}
          child {node (531) [circle,fill] {}
            child {node (5311) [circle,fill] {} }
            child {node (5312) [circle,fill] {} } }
          child {node (532) [circle,fill] {}
            child {node (5321) [circle,fill] {} }
            child {node (5322) [circle,fill] {} } } } }
      ;
    \end{tikzpicture}
    \end{center}
    \begin{center}
      \begin{tikzpicture}[
        inner sep=0.7pt,
        level distance=20pt,
        level 1/.style={sibling distance=80pt},
        level 2/.style={sibling distance=25pt},
        level 3/.style={sibling distance=12pt},
        level 4/.style={sibling distance=7pt}
      ]
      \node (root) [circle,fill,label={90:$(2\;4\;5)$}] {}
      child {node (1) [circle,fill] {}
        child {node (11) [circle,fill] {}
          child {node (111) [circle,fill,label={180:$(1\;2)$}] {}
            child {node (1111) [circle,fill] {} }
            child {node (1112) [circle,fill] {} } }
          child {node (112) [circle,fill] {}
            child {node (1121) [circle,fill] {} }
            child {node (1122) [circle,fill] {} } } }
        child {node (12) [circle,fill] {}
          child {node (121) [circle,fill] {}
            child {node (1211) [circle,fill] {} }
            child {node (1212) [circle,fill] {} } }
          child {node (122) [circle,fill] {}
            child {node (1221) [circle,fill] {} }
            child {node (1222) [circle,fill] {} } } }
        child {node (13) [circle,fill] {}
          child {node (131) [circle,fill] {}
            child {node (1311) [circle,fill] {} }
            child {node (1312) [circle,fill] {} } }
          child {node (132) [circle,fill] {}
            child {node (1321) [circle,fill] {} }
            child {node (1322) [circle,fill] {} } } } }
      child {node (2) [circle,fill] {}
        child {node (21) [circle,fill] {}
          child {node (211) [circle,fill] {}
            child {node (2111) [circle,fill] {} }
            child {node (2112) [circle,fill] {} } }
          child {node (212) [circle,fill] {}
            child {node (2121) [circle,fill] {} }
            child {node (2122) [circle,fill] {} } } }
        child {node (22) [circle,fill] {}
          child {node (221) [circle,fill] {}
            child {node (2211) [circle,fill] {} }
            child {node (2212) [circle,fill] {} } }
          child {node (222) [circle,fill] {}
            child {node (2221) [circle,fill] {} }
            child {node (2222) [circle,fill] {} } } }
        child {node (23) [circle,fill] {}
          child {node (231) [circle,fill] {}
            child {node (2311) [circle,fill] {} }
            child {node (2312) [circle,fill] {} } }
          child {node (232) [circle,fill] {}
            child {node (2321) [circle,fill] {} }
            child {node (2322) [circle,fill] {} } } } }
      child {node (3) [circle,fill] {}
        child {node (31) [circle,fill] {}
          child {node (311) [circle,fill] {}
            child {node (3111) [circle,fill] {} }
            child {node (3112) [circle,fill] {} } }
          child {node (312) [circle,fill] {}
            child {node (3121) [circle,fill] {} }
            child {node (3122) [circle,fill] {} } } }
        child {node (32) [circle,fill] {}
          child {node (321) [circle,fill] {}
            child {node (3211) [circle,fill] {} }
            child {node (3212) [circle,fill] {} } }
          child {node (322) [circle,fill] {}
            child {node (3221) [circle,fill] {} }
            child {node (3222) [circle,fill] {} } } }
        child {node (33) [circle,fill] {}
          child {node (331) [circle,fill] {}
            child {node (3311) [circle,fill] {} }
            child {node (3312) [circle,fill] {} } }
          child {node (332) [circle,fill] {}
            child {node (3321) [circle,fill] {} }
            child {node (3322) [circle,fill] {} } } } }
      child {node (4) [circle,fill] {}
        child {node (41) [circle,fill] {}
          child {node (411) [circle,fill] {}
            child {node (4111) [circle,fill] {} }
            child {node (4112) [circle,fill] {} } }
          child {node (412) [circle,fill] {}
            child {node (4121) [circle,fill] {} }
            child {node (4122) [circle,fill] {} } } }
        child {node (42) [circle,fill] {}
          child {node (421) [circle,fill] {}
            child {node (4211) [circle,fill] {} }
            child {node (4212) [circle,fill] {} } }
          child {node (422) [circle,fill] {}
            child {node (4221) [circle,fill] {} }
            child {node (4222) [circle,fill] {} } } }
        child {node (43) [circle,fill] {}
          child {node (431) [circle,fill] {}
            child {node (4311) [circle,fill] {} }
            child {node (4312) [circle,fill] {} } }
          child {node (432) [circle,fill] {}
            child {node (4321) [circle,fill] {} }
            child {node (4322) [circle,fill] {} } } } }
      child {node (5) [circle,fill] {}
        child {node (51) [circle,fill] {}
          child {node (511) [circle,fill] {}
            child {node (5111) [circle,fill] {} }
            child {node (5112) [circle,fill] {} } }
          child {node (512) [circle,fill] {}
            child {node (5121) [circle,fill] {} }
            child {node (5122) [circle,fill] {} } } }
        child {node (52) [circle,fill] {}
          child {node (521) [circle,fill] {}
            child {node (5211) [circle,fill] {} }
            child {node (5212) [circle,fill] {} } }
          child {node (522) [circle,fill] {}
            child {node (5221) [circle,fill] {} }
            child {node (5222) [circle,fill] {} } } }
        child {node (53) [circle,fill] {}
          child {node (531) [circle,fill] {}
            child {node (5311) [circle,fill] {} }
            child {node (5312) [circle,fill] {} } }
          child {node (532) [circle,fill] {}
            child {node (5321) [circle,fill] {} }
            child {node (5322) [circle,fill] {} } } } }
      ;
    \end{tikzpicture}
    \end{center}
    \caption{Generators $x$~and~$y$ for $C_{2} \wr C_{2} \wr C_{3} \wr
      A_{5}$}
    \label{fig:tree}
  \end{figure}
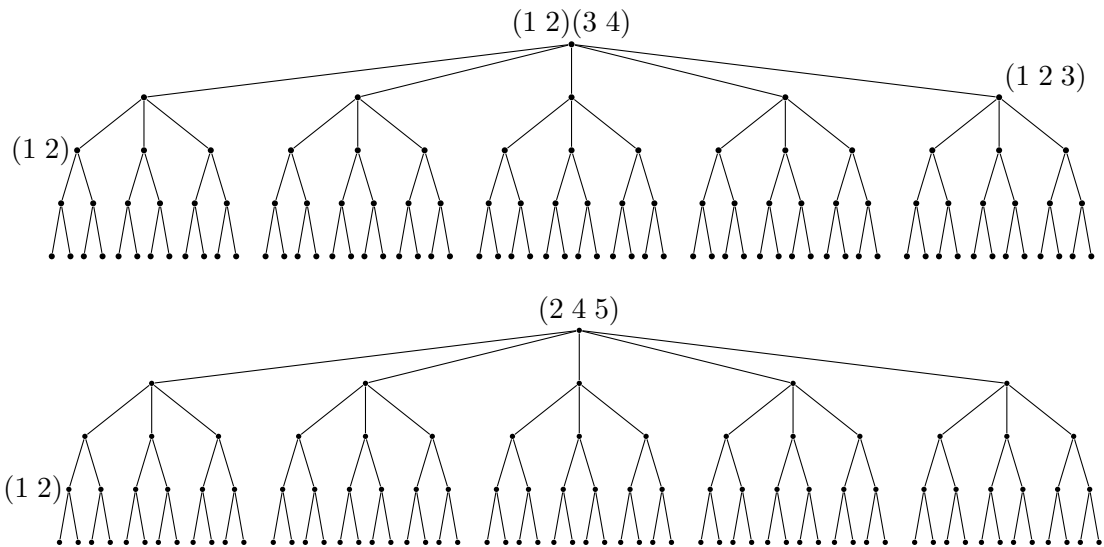

  Since $n$~is odd, $y$~is an element of~$W$ that has order~$2(n-2)$.
  Among its powers are the element that applies~$(1\;2)$ at the
  vertex~$111$ only and the element~$y'$ that applies the
  cycle~$(2\;4\;5\,\dots\,n)$ at the root.  Similarly, since $x$~is
  built from a $3$\nbd cycle acting below vertex~$5$ and permutations
  of $2$\nbd power order on a disjoint set of vertices, some power
  of~$x$ equals the element~$z$ that is given by applying $(1\;2)$~at
  vertex~$11$ followed by $(1\;2)(3\;4)$ at the root, and some power
  is the element which is the $3$\nbd cycle~$(1\;2\;3)$ at vertex~$5$.
  Since $A_{n}$~is generated by $(1\;2)(3\;4)$ and
  $(2\;4\;5\,\dots\,n)$, and  $L$~contains an element inducing a
  $3$\nbd cycle below one of the vertices at level~$1$, we deduce that
  $L$~contains elements that induce all permutations on the vertices
  of levels at most~$2$ by elements of the wreath product~$C_{3} \wr
  A_{n}$ (that is, $L$~projects onto the latter wreath product under
  the natural map).

  One calculates that $z^{2} = (111\;112)(211\;212)$ in cycle notation
  for a permutation of vertices at level~$3$; that is, $z^{2}$~is
  equal to the element of order~$2$ that swaps the vertices
  $111$~and~$112$ and the vertices~$211$~and~$212$.  Using
  permutations of the vertices in the levels above and then products
  of the resulting elements, we may therefore find in~$L$ all the
  permutations whose effect is to interchange the two pairs of
  descendants of vertices $\alpha$~and~$\beta$ of level~$2$ \ (that
  is, all $(\alpha1\;\alpha2)(\beta1\;\beta2)$ for $\length{\alpha} =
  \length{\beta} = 2$).  Finally, note that $xy'$~is given by
  applying~$(1\;2)$ at vertex~$11$ followed by an $n$\nbd cycle at the
  root.  Therefore $(xy')^{n}$~is a product of the $n$~transpositions
  that interchange the vertices~$a11$ and~$a12$ pairwise for $a \in
  \{1,2,\dots,n\}$.  As $n$~is odd and we can already produce any
  product of two transpositions of vertices at this level, we conclude
  that $L$~contains all transpositions of adjacent vertices in the
  third level and hence that $C_{2} \wr C_{3} \wr A_{5} \leq L$.
  Combining with the fact that some power of~$y$ is the transposition
  of the vertices $1111$~and~$1112$, we finally conclude that
  $x$~and~$y$ generate the wreath product $W = C_{2} \wr C_{2} \wr
  C_{3} \wr A_{5}$.
\end{example}

\end{document}